\newtheorem{theorem}{Theorem}
\newtheorem{lemma}[theorem]{Lemma}
\newtheorem{prop}[theorem]{Proposition}
\theoremstyle{definition}
\newtheorem{defin}[theorem]{Definition}
\newtheorem{que}[theorem]{Question}
\theoremstyle{remark}
\newtheorem*{rem}{Remark}
\newtheorem*{claim}{Claim}
\renewcommand{\phi}{\varphi}
\newcommand{\aut}{\mathrm{Aut}}
\renewcommand{\cal}[1]{\mathcal{#1}}
\newcommand{\bb}[1]{\mathbb{#1}}
\def\-{\raisebox{.30pt}{-}}
\begin{document}

\title{Minimal subdynamics and minimal flows without characteristic measures}
\author{Joshua Frisch, Brandon Seward, and Andy Zucker}

\date{March 2022}

\maketitle

\begin{abstract}
	Given a countable group $G$ and a $G$-flow $X$, a measure $\mu\in P(X)$ is called characteristic if it is $\aut(X, G)$-invariant. Frisch and Tamuz asked about the existence of a minimal $G$-flow, for any group $G$, which does not admit a characteristic measure. We construct for every countable group $G$ such a minimal flow. Along the way, we are motivated to consider a family of questions we refer to as minimal subdynamics: Given a countable group $G$ and a collection of infinite subgroups $\{\Delta_i: i\in I\}$, when is there a faithful $G$-flow for which every $\Delta_i$ acts minimally?
	\let\thefootnote\relax\footnote{2020 Mathematics Subject Classification. Primary: 37B05. Secondary: 37B10.}
	\let\thefootnote\relax\footnote{J.F.\ was supported by NSF Grant DMS-2102838. B.S.\ was supported by NSF Grant DMS-1955090 and Sloan Grant FG-2021-16246. A.Z.\ was supported by NSF Grant DMS-2054302.}
\end{abstract}

Given a countable group $G$ and a faithful $G$-flow $X$, we write $\aut(X, G)$ for the group of homeomorphisms of $X$ which commute with the $G$-action. When $G$ is abelian, $\aut(X, G)$ contains a natural copy of $G$ resulting from the $G$-action, but in general this need not be the case. Much is unknown about how the properties of $X$ restrict the complexity of $\aut(X, G)$; for instance, Cyr and Kra \cite{CK} conjecture that when $G = \bb{Z}$ and $X\subseteq 2^\bb{Z}$ is a minimal, $0$-entropy subshift, then $\aut(X, \bb{Z})$ must be amenable. In fact, no counterexample is known even when restricting to any two of the three properties ``minimal," ``$0$-entropy," or ``subshift." In an effort to shed light on this question, Frisch and Tamuz \cite{FT} define a probability measure $\mu\in P(X)$ to be \emph{characteristic} if it is $\aut(X, G)$-invariant. They show that $0$-entropy subshifts always admit characteristic measures. More recently, Cyr and Kra \cite{CK2} provide several examples of flows which admit characteristic measures for non-trivial reasons, even in cases where $\aut(X, G)$ is non-amenable. Frisch and Tamuz asked (Question 1.5, \cite{FT}) whether there exists, for any countable group $G$, some minimal $G$-flow without a characteristic measure. We give a strong affirmative answer.
\vspace{2 mm}

\begin{theorem}
	\label{Thm:CharMeas}
	For any countably infinite group $G$, there is a free minimal $G$-flow $X$ so that $X$ does not admit a characteristic measure. More precisely, there is a free $(G\times F_2)$-flow $X$ which is minimal as a $G$-flow and with no $F_2$-invariant measure.
\end{theorem}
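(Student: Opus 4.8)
The plan is to prove the refined second statement, since it immediately yields the first. Suppose we have a free $(G\times F_2)$-flow $X$ that is minimal as a $G$-flow and carries no $F_2$-invariant measure. Because the $G$- and $F_2$-actions commute, each element of $F_2$ acts on $X$ by a homeomorphism commuting with the $G$-action, so the $F_2$-action embeds $F_2$ into $\aut(X,G)$, faithfully by freeness. Any characteristic measure $\mu\in P(X)$ is $\aut(X,G)$-invariant, hence in particular invariant under this copy of $F_2$; as no such measure exists, $X$ admits no characteristic measure. Since restricting a free action to a subgroup remains free, $X$ is also a free minimal $G$-flow, as required. Thus everything reduces to constructing such an $X$.

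For the absence of an $F_2$-invariant measure, I would aim for the clean sufficient condition that the $F_2$-flow $X$ (forgetting $G$) admits a continuous $F_2$-equivariant surjection $c\colon X\to \partial F_2$ onto the boundary of $F_2$. The boundary action is minimal and strongly proximal and therefore carries no $F_2$-invariant probability measure; since the pushforward $c_*\mu$ of any $F_2$-invariant $\mu$ would be such a measure, $X$ can have none. Equivalently, one may try to make the entire $F_2$-action on $X$ strongly proximal, so that freeness rules out a global $F_2$-fixed point while strong proximality rules out any other invariant measure. Note that $c$ is only required to be $F_2$-equivariant, not $G$-equivariant; indeed $G$-minimality forbids any nonconstant $G$-invariant continuous map, so the $F_2$-factor map to $\partial F_2$ must be genuinely moved around by $G$.

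The heart of the argument is then to produce a free $(G\times F_2)$-flow $X$ that is $G$-minimal and carries such an $F_2$-boundary factor. This is exactly a minimal-subdynamics problem for the group $\Gamma=G\times F_2$ with the distinguished normal subgroup $G$, of infinite index since $F_2$ is infinite, and I would build $X$ inside a shift $A^{\Gamma}$ by a recursive, inverse-limit construction. The construction must simultaneously (i) enforce $G$-minimality, by arranging that every finite pattern appearing over a $G$-window recurs syndetically along $G$; (ii) encode, $F_2$-equivariantly, a point of $\partial F_2$ so as to furnish the factor map $c$, or alternatively build in a continuous strongly proximal $F_2$-structure; and (iii) guarantee freeness of the $\Gamma$-action by the usual marker and genericity devices.

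The main obstacle is reconciling (i) and (ii) under the constraint that the $G$- and $F_2$-actions commute. Because $F_2$ is centerless, there is no room to twist the $G$-action by $F_2$-elements, so the naive product and skew-product constructions collapse: any $F_2$-equivariant map fixed by $G$ would be $G$-invariant, hence constant, contradicting $G$-minimality. Consequently the boundary data along the $F_2$-direction and the syndetic repetition demanded by $G$-minimality cannot be separated into independent coordinates; they must be interwoven in a single configuration, with $G$ permuting a whole family of $F_2$-boundary factors. Carrying out this interweaving, that is, designing the alphabet and the local rules so that the two requirements coexist while keeping the action free, is the technical crux, and is where the subdynamics machinery does the real work.
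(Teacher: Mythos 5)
Your opening reduction is exactly right and matches the paper: since the $F_2$-action commutes with the $G$-action, $F_2$ maps into $\aut(X,G)$, so a free $(G\times F_2)$-flow that is $G$-minimal and has no $F_2$-invariant measure settles the theorem. Your sufficient criteria for killing invariant measures (an $F_2$-equivariant factor onto $\partial F_2$, or strong proximality plus freeness) are also valid. But everything after that is a statement of the problem rather than a proof: the entire content of the theorem is the construction you defer to ``a recursive, inverse-limit construction'' with requirements (i)--(iii), and you yourself identify the reconciliation of (i) and (ii) as the crux without offering any mechanism for it. So there is a genuine gap, and it sits precisely where the paper's actual work lies.

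Moreover, the specific route you propose runs into a concrete obstruction that your sketch does not engage. The only known general machine for making $G$-minimality coexist with prescribed $F_2$-behavior is a Baire-category argument in a space of subshifts (this is how the paper proceeds, via UFOs and genericity of $(G,E)$-minimality), and that machine needs two things: the $F_2$-condition must be \emph{open} in the Vietoris topology, and it must be non-vacuous inside a space of subshifts with enough irreducibility for the minimality-density argument to run. ``No $F_2$-invariant measure'' is open; ``admits a factor onto $\partial F_2$'' is not, and is destroyed by the local pattern surgery used to force minimality. Worse, the natural subshift realizations of boundary data are maximally rigid (the paper's Figure~2 makes the same point for $X_{pdox}$: forcing a few edges propagates constraints to arbitrary distance), whereas the genericity argument requires some irreducibility of the ambient shift. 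The paper's key insight, which your proposal is missing, is a \emph{weakened} irreducibility notion ($\cal{B}$-irreducibility with respect to sets whose connected components in the Cayley graph of $F_2$ are far apart) that is simultaneously (a) weak enough to be satisfied by a concrete measureless shift, namely the paradoxical-decomposition shift $X_{pdox}$, and (b) strong enough that $G$-minimality and freeness remain generic in the closure $\Phi$ of the $\cal{B}_n^*$-irreducible subshifts of $(2^\bb{N})^{G\times F_2}$. Note also that no strongly irreducible $F_2$-shift without invariant measure is known to exist, so this weakening is not a convenience but a necessity; any correct completion of your sketch would have to rediscover something playing the same role.
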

\vspace{2 mm}

Over the course of proving Theorem~\ref{Thm:CharMeas}, there are two main difficulties to overcome. The first difficulty is a collection of dynamical problems we refer to as \emph{minimal subdynamics}. The general template of these questions is as follows. 
\vspace{2 mm}

\begin{que}
	\label{Que:MinSubdyn}
	Given a countably infinite group $\Gamma$ and a collection $\{\Delta_i: i\in I\}$ of infinite subgroups of $\Gamma$, when is there a faithful (or essentially free, or free) minimal $\Gamma$-flow for which the action of each $\Delta_i$ is also minimal? Is there a natural space of actions in which such flows are generic?
\end{que}
\vspace{2 mm}

In \cite{ZucMinCent}, the author showed that this was possible in the case $\Gamma = G\times H$ and $\Delta = G$ for any countably infinite groups $G$ and $H$. We manage to strengthen this result considerably.
\vspace{2 mm}

\begin{theorem}
	\label{Thm:MinSubDyn}
	For any countably infinite group $\Gamma$ and any collection $\{\Delta_n: n\in \bb{N}\}$ of infinite normal subgroups of $\Gamma$, there is a free $\Gamma$-flow which is minimal as a $\Delta_n$-flow for every $n\in \bb{N}$. 
\end{theorem}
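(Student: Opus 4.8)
The plan is to realize the desired flow as a subshift $X\subseteq 2^\Gamma$ with the shift $\Gamma$-action and to phrase all three requirements---freeness, and $\Delta_n$-minimality for each $n$---as combinatorial conditions on the set of finite patterns occurring in $X$. Recall that a subshift is minimal exactly when every occurring pattern does so syndetically in every point; I would use the analogous \emph{subgroup} version: $X$ is $\Delta_n$-minimal iff for every finite window $F\subseteq\Gamma$ there is a finite $K\subseteq\Delta_n$ such that for all $x,y\in X$ some $\delta\in K$ satisfies $(\delta\cdot x)|_F = y|_F$. Call this condition $(T_n)$. Two reductions clean up the target. First, since $\Delta_n\leq\Gamma$, a dense $\Delta_n$-orbit is a fortiori a dense $\Gamma$-orbit, so $(T_n)$ forces $\Gamma$-minimality automatically and it need not be imposed separately. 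Second, freeness of the $\Gamma$-action can be secured through aperiodicity of the generating point: it suffices to build a hyper aperiodic coloring in the sense of Gao--Jackson--Seward, a tool available for every countable group. Thus the whole theorem reduces to constructing a single aperiodic $x\in 2^\Gamma$ whose orbit closure satisfies $(T_n)$ for every $n$.

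To build such an $x$, I would run a recursive tiling (``blueprint'') construction in the style of Gao--Jackson--Seward, interleaving a list of requirements obtained by enumerating all pairs $(n,F)$ together with the aperiodicity witnesses $\gamma\in\Gamma\setminus\{e\}$. At each stage one works inside a large but finite marker region and reserves a family of disjoint translates on which a designated pattern is copied, so that by the end every finite pattern reappears within a fixed finite $\Delta_n$-bounded set of positions, uniformly over the orbit closure; earlier commitments are preserved by only ever refining inside previously unreserved cells. The essential use of normality enters here: because $\Delta_n\trianglelefteq\Gamma$, left translation by $\gamma$ sends the right coset $\Delta_n g$ to $\Delta_n\gamma g$, so $\Gamma$ permutes the $\Delta_n$-orbits on $\Gamma$ and $\Delta_n\backslash\Gamma$ is itself a group. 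Consequently a recurrence guarantee enforced on one transversal of $\Delta_n\backslash\Gamma$ propagates $\Gamma$-equivariantly to a \emph{uniform} $\Delta_n$-recurrence guarantee on all of $X$. Equivalently, one may package the construction as a Baire category argument: in a suitable Polish space of minimal $\Gamma$-subshifts each condition $(T_n)$ and each aperiodicity condition should be comeager, and the intersection of these countably many comeager sets is nonempty, which is the genericity phenomenon anticipated in Question~\ref{Que:MinSubdyn}.

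The main obstacle is satisfying the conditions $(T_n)$ for all $n$ simultaneously. Forcing a pattern to recur densely \emph{within} $\Delta_n$ is far stronger than ordinary recurrence, and the $\Delta_n$ may sit inside $\Gamma$ in completely unrelated ways, so the marker regions used at different stages must be chosen compatibly with every $\Delta_n$ at once---a nested tiling coherent across all the coset structures $\Delta_n\backslash\Gamma$. Compounding this is the tension with freeness: syndetic recurrence pushes the coloring toward periodic repetition, whereas freeness forbids any genuine period, so the repeated blocks must be perturbed enough to stay aperiodic while still certifying $(T_n)$. The case $\Gamma=G\times H$, $\Delta=G$ handled in \cite{ZucMinCent} is the warm-up, where the splitting $\Gamma=\Delta\times(\Gamma/\Delta)$ supplies an obvious transversal; the difficulty in Theorem~\ref{Thm:MinSubDyn} is precisely that no such splitting need exist, which is what makes normality---rather than a product hypothesis---the correct level of generality and forces the more robust, equivariant tiling argument sketched above.
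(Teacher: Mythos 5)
Your reductions are fine---passing to subshifts, the compactness reformulation $(T_n)$ of $\Delta_n$-minimality, the remark that $\Delta_n$-minimality already gives $\Gamma$-minimality, and the criterion that a hyper-aperiodic point has free orbit closure---but the proposal has a genuine gap at exactly the point you yourself flag as ``the main obstacle,'' and neither of your two suggested mechanisms closes it. The crux of the theorem is: how do you force every pattern occurring in the orbit closure to recur within a \emph{bounded subset of $\Delta_n$}? Concretely, the marker regions produced by a blueprint-style recursion (or, in the paper's language, a maximal $V$-spaced set on which a special pattern $\alpha$ is planted) are syndetic in $\Gamma$, but when $\Delta_n$ has infinite index nothing prevents such a set from missing entire right cosets of $\Delta_n$; a point of the orbit closure ``centered'' along a missed coset then has no $\Delta_n$-translate witnessing the required patterns. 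Normality as you invoke it---left translation permutes the right cosets $\Delta_n g$---is true but does no work here: it neither makes your tiling meet each coset, nor makes patterns planted in one coset visible from another coset under the $\Delta_n$-action. The paper isolates precisely the missing property as a definition: a $(\Gamma,\Delta)$-UFO is a finite set $V$ such that \emph{every} maximal $V$-spaced subset of $\Gamma$ meets every right coset of $\Delta$. It then proves, by a counting argument, that the infinite-normal-subgroup hypothesis yields UFOs containing any prescribed finite set (Proposition~\ref{Prop:UFOsExist})---this is where normality actually enters---and uses a pattern planted on a maximal $V$-spaced set, for $V$ a UFO, to get $(\Delta,E)$-minimality (Proposition~\ref{Prop:UFOMinimal}). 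Without this idea, or some substitute, your recursion has no way to discharge the requirements $(T_n)$.

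Your alternative packaging as a Baire category argument is also not available as stated, because the ambient space is itself a load-bearing choice. In $\mathrm{Sub}(2^\Gamma)$, and even in its subspace of minimal subshifts, the fixed point $\{0^\Gamma\}$ is Vietoris-isolated (any subshift with the same $\{1_\Gamma\}$-patterns equals it), is minimal for every subgroup, and is maximally non-free; so aperiodicity is not even dense in ``the space of minimal $\Gamma$-subshifts,'' and there is no reason for the conditions $(T_n)$ to be dense either, since proving their density requires gluing prescribed patterns together, which arbitrary minimal subshifts do not permit. The paper's resolution, following \cite{FTVF}, is to work in the Vietoris closure $\overline{\cal{S}}$ of the \emph{strongly irreducible} subshifts---strong irreducibility is exactly the gluing tool used in the density argument---and over the Cantor alphabet $(2^\bb{N})^\Gamma$ rather than $2^\Gamma$, which is what makes freeness a dense $G_\delta$ (Proposition~\ref{Prop:CantorFree}; the paper notes this can fail for finite alphabets). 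Each condition ``minimal as a $\Delta_n$-flow'' is then a dense $G_\delta$ in $\overline{\cal{S}}((2^\bb{N})^\Gamma)$ by the UFO argument, and the theorem follows by intersecting countably many such sets. So your high-level instincts (category plus equivariance) point the right way, but the two essential ingredients---the UFO property with its existence proof from normality, and the correct ambient space---are missing, and without them the sketch does not constitute a proof.
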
 
\vspace{2 mm}

In fact, what we show when proving Theorem~\ref{Thm:MinSubDyn} is considerably stronger. Recall that given a countably infinite group $\Gamma$, a subshift $X\subseteq 2^\Gamma$ is \emph{strongly irreducible} if there is some finite symmetric $D\subseteq \Gamma$ so that whenever $S_0, S_1\subseteq \Gamma$ satisfy $DS_0\cap S_1 = \emptyset$ (i.e.\ $S_0$ and $S_1$ are \emph{$D$-apart}), then for any $x_0, x_1\in X$, there is $y\in X$ with $y|_{S_i} = x_i|_{S_i}$ for each $i< 2$. Write $\cal{S}$ for the set of strongly irreducible subshifts, and write $\overline{\cal{S}}$ for its Vietoris closure. Frisch, Tamuz, and Vahidi-Ferdowsi \cite{FTVF} show that in $\overline{\cal{S}}$, the minimal subshifts form a dense $G_\delta$ subset. In our proof of Theorem~\ref{Thm:MinSubDyn}, we show that the shifts in $\overline{\cal{S}}$ which are $\Delta_n$-minimal for each $n\in \bb{N}$ also form a dense $G_\delta$ subset. 

This brings us to the second main difficulty in the proof of Theorem~\ref{Thm:CharMeas}. Using this stronger form of Theorem~\ref{Thm:MinSubDyn}, one could easily prove Theorem~\ref{Thm:CharMeas} by finding a strongly irreducible $F_2$-subshift which does not admit an invariant measure. This would imply the existence of a strongly irreducible $(G\times F_2)$-subshift without an $F_2$-invariant measure. As not admitting an $F_2$-invariant measure is a Vietoris-open condition, the genericity of $G$-minimal subshifts would then be enough to obtain the desired result. Unfortunately whether such a strongly irreducible subshift can exist (for any non-amenable group) is an open question. To overcome this, we introduce a flexible weakening of the notion of a strongly irreducible shift.

The paper is organized as follows. Section $1$ is a very brief background section on subsets of groups, subshifts, and strong irreducibility. Section $2$ introduces the notion of a UFO, a useful combinatorial gadget for constructing shifts where subgroups act minimally; Theorem~\ref{Thm:MinSubDyn} answers Question 3.6 from \cite{ZucMinCent}. Section $3$ introduces the notion of $\cal{B}$-irreduciblity for any group $H$, where $\cal{B}\subseteq \cal{P}_f(H)$ is a right-invariant collection of finite subsets of $H$. When $H = F_2$, we will be interested in the case when $\cal{B}$ is the collection of finite subsets of $F_2$ which are connected in the standard left Cayley graph. Section 4 gives the proof of Theorem~\ref{Thm:CharMeas}.

\section{Background}

Let $\Gamma$ be a countably infinite group. Given $U, S\subseteq \Gamma$ with $U$ finite, then we call $S$ a (one-sided) \emph{$U$-spaced} set if for every $g\neq h\in S$ we have $h\not\in Ug$, and we call $S$ a \emph{$U$-syndetic} set if $US = \Gamma$. A \emph{maximal $U$-spaced set} is simply a $U$-spaced set which is maximal under inclusion. We remark that if $S$ is a maximal $U$-spaced set, then $S$ is $(U\cup U^{-1})$-syndetic. We say that sets $S, T\subseteq \Gamma$ are (one-sided) \emph{$U$-apart} if $US\cap T = \emptyset$ and $S\cap UT = \emptyset$. Notice that much of this discussion simplifies when $U$ is symmetric, so we will often assume this. Also notice that the properties of being $U$-spaced, maximal $U$-spaced, $U$-syndetic, and $U$-apart are all right invariant.

If $A$ is a finite set or \emph{alphabet}, then $\Gamma$ acts on $A^\Gamma$ by \emph{right shift}, where given $x\in A^\Gamma$ and $g, h\in \Gamma$, we have $(g{\cdot}x)(h) = x(hg)$. A \emph{subshift} of $A^\Gamma$ is a non-empty, closed, $\Gamma$-invariant subset. Let $\mathrm{Sub}(A^\Gamma)$ denote the space of subshifts of $A^\Gamma$ endowed with the Vietoris topology. This topology can be described as follows. Given $X\subseteq A^\Gamma$ and a finite $U\subseteq \Gamma$, the set of \emph{$U$-patterns} of $X$ is the set $P_U(X) = \{x|_U: x\in X\}\subseteq A^U$. Then the typical basic open neighborhood of $X\in \mathrm{Sub}(A^\Gamma)$ is the set $N_U(X):= \{Y\in \mathrm{Sub}(A^\Gamma): P_U(Y) = P_U(X)\}$, where $U$ ranges over finite subsets of $\Gamma$.

A subshift $X\subseteq A^\Gamma$ is \emph{$U$-irreducible} if for any $x_0, x_1\in X$ and any $S_0, S_1\subseteq \Gamma$ which are $U$-apart, there is $y\in X$ with $y|_{S_i} = x_i|_{S_i}$ for each $i< 2$. If $X$ is $U$-irreducible and $V\supseteq U$ is finite, then $X$ is also $V$-irreducible. We call $X$ \emph{strongly irreducible} if there is some finite $U\subseteq \Gamma$ with $X$ $U$-irreducible. By enlarging $U$ if needed, we can always assume $U$ is symmetric. Let $\cal{S}(A^\Gamma)\subseteq \mathrm{Sub}(A^\Gamma)$ denote the set of strongly irreducible subshifts of $A^\Gamma$, and let $\overline{\cal{S}}(A^\Gamma)$ denote the closure of this set in the Vietoris topology.

More generally, if $2^\bb{N}$ denotes Cantor space, then $\Gamma$ acts on $(2^\bb{N})^\Gamma$ by right shift exactly as above, and we will also refer to closed, $\Gamma$-invariant subsets of $(2^\bb{N})^\Gamma$ as subshifts. If $k< \omega$, we let $\pi_k\colon 2^\bb{N}\to 2^k$ denote the restriction to the first $k$ entries. This induces a factor map $\tilde{\pi}_k\colon (2^\bb{N})^\Gamma\to (2^k)^\Gamma$ given by $\tilde{\pi}_k(x)(g) = \pi_k(x(g))$; we also obtain a map $\overline{\pi}_k\colon \mathrm{Sub}((2^\bb{N})^\Gamma)\to \mathrm{Sub}((2^k)^\Gamma)$ (where $2^k$ is viewed as a finite alphabet) given by $\overline{\pi}_k(X) = \tilde{\pi}_k[X]$. The Vietoris topology on $\mathrm{Sub}((2^\bb{N})^\Gamma)$ is the coarsest topology making every such $\overline{\pi}_k$ continuous. We call a subshift $X\subseteq (2^\bb{N})^\Gamma$ \emph{strongly irreducible} if for every $k< \omega$, the subshift $\overline{\pi}_k(X)\subseteq (2^k)^\Gamma$ is strongly irreducible in the ordinary sense. As before, we let $\cal{S}((2^\bb{N})^\Gamma)\subseteq \mathrm{Sub}((2^\bb{N})^\Gamma)$ denote the set of strongly irreducible subshifts of $C^\Gamma$, and we let $\overline{\cal{S}}((2^\bb{N})^\Gamma)$ denote its Vietoris closure.

The idea of considering the closure of the strongly irreducible shifts has it roots in \cite{FT2}. This is made more explicit in \cite{FTVF}, where it is shown that in $\overline{\cal{S}}(A^\Gamma)$, the minimal subflows form a dense $G_\delta$ subset. More or less the same argument shows that the same holds in $\overline{\cal{S}}((2^\bb{N})^\Gamma)$ (see \cite{GTWZ}). Recall that a $\Gamma$-flow $X$ is \emph{free} if for every $g\in \Gamma\setminus \{1_\Gamma\}$ and every $x\in X$, we have $gx\neq x$. The main reason for considering a Cantor space alphabet is the following result, which need not be true for finite alphabets. 
\vspace{2 mm}

\begin{prop}
	\label{Prop:CantorFree}
	In $\overline{\cal{S}}((2^\bb{N})^\Gamma)$, the free subshifts form a dense $G_\delta$ subset. 
\end{prop}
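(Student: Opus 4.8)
The plan is to establish the two halves separately: that the free subshifts form a $G_\delta$ set (already in $\mathrm{Sub}((2^\bb{N})^\Gamma)$), and that they are dense in $\overline{\cal{S}}((2^\bb{N})^\Gamma)$; together these give the claim with no appeal to Baire category.

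For the $G_\delta$ part, fix $g\in\Gamma\setminus\{1_\Gamma\}$ and let $\mathrm{Fix}(g)=\{x\in(2^\bb{N})^\Gamma: gx=x\}$, a closed set. I claim $O_g:=\{X: X\cap\mathrm{Fix}(g)=\emptyset\}$, the subshifts on which $g$ acts freely, is Vietoris-open. Indeed, if $X_n\to X$ with each $X_n\cap\mathrm{Fix}(g)\neq\emptyset$, choose $x_n\in X_n\cap\mathrm{Fix}(g)$; by compactness of $(2^\bb{N})^\Gamma$ pass to a convergent subsequence $x_n\to x$. Then $x\in\mathrm{Fix}(g)$ since the latter is closed, and $x\in X$ by the defining property of Vietoris convergence, so $X\notin O_g$. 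Hence the complement of $O_g$ is closed. Since $X$ is free exactly when $X\in\bigcap_{g\neq 1_\Gamma}O_g$, freeness is a countable intersection of open conditions, i.e.\ $G_\delta$, and this persists in the subspace $\overline{\cal{S}}((2^\bb{N})^\Gamma)$.

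For density I would exploit the Cantor alphabet through a ``tail-encoding'' trick. Suppose first that a \emph{free} strongly irreducible $W\subseteq(2^\bb{N})^\Gamma$ exists. Since $\cal{S}((2^\bb{N})^\Gamma)$ is dense in $\overline{\cal{S}}((2^\bb{N})^\Gamma)$, it suffices to find a free strongly irreducible shift in each basic neighborhood of a strongly irreducible $X$, determined by $\overline{\pi}_k$ for some $k$. Using a fixed homeomorphism $2^\bb{N}\cong 2^k\times 2^\bb{N}$ to split each alphabet coordinate into its first $k$ bits and a tail, write $(2^\bb{N})^\Gamma\cong(2^k)^\Gamma\times(2^\bb{N})^\Gamma$ and set $Y\cong\overline{\pi}_k(X)\times W$. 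Then $\overline{\pi}_k(Y)=\overline{\pi}_k(X)$, so $Y$ lies in the given neighborhood; $Y$ is free because $gy=y$ forces the tail of $y$, a point of $W$, to be $g$-fixed, whence $g=1_\Gamma$; and $Y$ is strongly irreducible since for each $m$ the projection $\overline{\pi}_m(Y)$ is either a projection of the strongly irreducible $X$ (if $m\le k$) or a product $\overline{\pi}_k(X)\times\overline{\pi}_{m-k}(W)$ of two finite-alphabet strongly irreducible shifts, which is again strongly irreducible with the union of the two irreducibility windows.

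It remains to produce a single free strongly irreducible $W$, which I expect to be the main obstacle and is exactly the point where finiteness of the alphabet fails. I would build $W$ as an infinite ``layered'' product over the tail coordinates: enumerate $\Gamma\setminus\{1_\Gamma\}=\{g_n:n\in\bb{N}\}$, devote the $n$-th block of alphabet bits to a strongly irreducible $W_n\subseteq A_n^\Gamma$ over a small finite alphabet $A_n$ that contains \emph{no} $g_n$-fixed point, and let $W=\prod_n W_n$. Each $W_n$ is obtained by forbidding constancy along the $\langle g_n\rangle$-orbits, concretely by forbidding $w(h)=w(hg_n)=w(hg_n^2)$ for all $h$ (with a two-coloring variant when $g_n$ has order $2$); this excludes every point fixed by $g_n$, since such a point is constant on each left coset $h\langle g_n\rangle$. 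The verification that this local constraint still yields a strongly irreducible shift, filling any two $U$-apart patterns along each $\langle g_n\rangle$-orbit (which looks like a copy of $\bb{Z}$ or $\bb{Z}/m\bb{Z}$ on which a short forbidden-run condition is freely extendable), is the delicate step, together with the bookkeeping at coset boundaries and for torsion elements. Granting it, each finite projection $\overline{\pi}_m(W)$ is a finite product of the $W_n$ and hence strongly irreducible, so $W$ is strongly irreducible in the Cantor sense; and $W$ is free because for every $g_n$ the $n$-th layer rules out $g_n$-fixed points. This $W$ feeds the density argument above and completes the proof.
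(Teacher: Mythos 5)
Your overall architecture is sound and genuinely different from the paper's. The $G_\delta$ half is the same fact the paper proves (the paper phrases it via the closed set $\Omega=\{(X,x): x\in X\}$ and an open projection rather than via sequences). For density, however, the paper fixes a single $g\neq 1_\Gamma$, builds a finite-alphabet strongly irreducible shift on which that one $g$ acts freely, namely $\mathrm{Color}(D,m)\subseteq m^\Gamma$ with $g\in D$, $m=|D|$, and each color class $D$-spaced, multiplies it into the given neighborhood, and then obtains full freeness only generically, by intersecting the countably many open dense sets of $g$-free shifts (a Baire category step). You instead reduce everything to the existence of a single \emph{free} strongly irreducible $W\subseteq (2^\bb{N})^\Gamma$; your product $Y\cong \overline{\pi}_k(X)\times W$ then places a genuinely free strongly irreducible shift in every basic neighborhood, with no appeal to Baire. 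That reduction, and your check that $\overline{\pi}_m(Y)$ is strongly irreducible for every $m$, are correct.

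The gap is exactly where you say it is, in the construction of $W$, and it is not merely a deferred verification: as written, your layer $W_n$ is not a subshift. With the paper's convention $(g{\cdot}x)(h)=x(hg)$, the condition ``for all $h$, not $w(h)=w(hg_n)=w(hg_n^2)$'' is not $\Gamma$-invariant: applying the shift by $g$ converts it into the same condition for the conjugate $g^{-1}g_ng$, so your $W_n$ is invariant only when $g_n$ is central. A shift-invariant local constraint must couple positions $Fh=\{fh: f\in F\}$ for a fixed finite window $F$, so you should instead forbid constancy on $\{h,\, g_nh,\, g_n^2h\}$. After that fix the idea does work, but for a reason you would still need to supply: a $g_n$-fixed point is constant on left cosets $h\langle g_n\rangle$, and it is the window at $h=1_\Gamma$, which lies inside $\langle g_n\rangle$, that produces the contradiction; strong irreducibility must then be verified along the right cosets $\langle g_n\rangle h$, where the constraint lives and where $U$-apartness for $U=\{g_n^j: |j|\leq 2\}$ becomes index separation. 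The cleanest repair is to take for $W_n$ precisely the paper's gadget: for a finite symmetric $D_n$ containing $g_n$ and $1_\Gamma$, let $W_n=\mathrm{Color}(D_n,|D_n|)$, which is non-empty and $D_n$-irreducible by a greedy coloring argument, and is $g_n$-free because the fixed-point equation at $h=1_\Gamma$ forces $1_\Gamma$ and $g_n$ into the same $D_n$-spaced color class. Setting $W=\prod_n W_n$ then completes your proof, and in fact yields a mild strengthening of the paper's argument, since density of the actual free shifts is obtained directly rather than through genericity.
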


\begin{proof}
	The set $\Omega:= \{(X, x)\in \mathrm{Sub}((2^\bb{N})^\Gamma)\times (2^\bb{N})^\Gamma: x\in X\}$ is closed in the product, and the map $\pi\colon \Omega\to \mathrm{Sub}((2^\bb{N})^\Gamma)$ given by projecting to the first coordinate is open. Fixing $g\in \Gamma\setminus \{1_\Gamma\}$, the set $\{(X, x)\in \Omega: gx\neq x\}$ is open in $\Omega$. It follows that the set of $X\in \mathrm{Sub}((2^\bb{N})^\Gamma)$ for which $gx\neq x$ for every $x\in X$ is also open. Intersecting over all $g\in \Gamma\setminus \{1_\Gamma\}$, we see that freeness is a $G_\delta$ condition.  
	
	Thus it remains to show that freeness is dense in $\overline{\cal{S}}((2^\bb{N})^\Gamma)$. To that end, we fix $g\in \Gamma\setminus \{1_\Gamma\}$ and show that the set of shifts in $\cal{S}((2^\bb{N})^\Gamma)$ where $g$ acts freely is dense. Fix $X\in \cal{S}((2^\bb{N})^\Gamma)$, $k< \omega$, and a finite $U\subseteq \Gamma$; so a typical open set in $\cal{S}((2^\bb{N})^\Gamma)$ has the form $\{X'\in \cal{S}((2^\bb{N})^\Gamma): P_U(\overline{\pi}_k(X')) = P_U(\overline{\pi}_k(X))\}$. We want to produce $Y\in \mathrm{Sub}((2^\bb{N})^\Gamma)$ which is strongly irreducible, $g$-free, and with $P_U(\overline{\pi}_k(Y)) = P_U(\overline{\pi}_k(X))$. In fact, we will produce such a $Y$ with $\overline{\pi}_k(Y) = \overline{\pi}_k(X)$. 
	
	Let $D\subseteq \Gamma$ be a finite symmetric set containing $g$ and $1_\Gamma$. Setting $m = |D|$, consider the subshift $\mathrm{Color}(D, m)\subseteq m^\Gamma$ defined by
	$$\mathrm{Color}(D, m) := \{x\in m^\Gamma: \forall\, i < m\, [x^{-1}(\{i\})\text{ is $D$-spaced}]\}.$$
	A greedy coloring argument shows that $\mathrm{Color}(D, m)$ is non-empty and $D$-irreducible. Moreover, $g$ acts freely on $\mathrm{Color}(D, m)$. Inject $m$ into $2^{\{k,...,\ell-1\}}$ for some $\ell> k$ and identify $\mathrm{Color}(D, m)$ as a subflow of $(2^{\{k,...,\ell-1\}})^\Gamma$. Then $Y:= \overline{\pi}_k(X)\times \mathrm{Color}(D, m)\subseteq (2^\ell)^\Gamma\subseteq (2^\bb{N})^\Gamma$, where the last inclusion can be formed by adding strings of zeros to the end. Then $Y$ is strongly irreducible, $g$-free, and $\overline{\pi}_k(Y) = \overline{\pi}_k(X)$.
\end{proof}

\section{UFOs and minimal subdynamics}

Much of the construction will require us to reason about the product group $G\times F_2$. So for the time being, fix countably infinite groups $\Delta\subseteq \Gamma$. For our purposes, $\Gamma$ will be $G\times F_2$, and $\Delta$ will be $G$, where we identify $G$ with a subgroup of $G\times F_2$ in the obvious way. However, for this subsection, we will reason more generally.
\vspace{2 mm}

\begin{defin}
	\label{Def:UFO}
	Let $\Delta\subseteq \Gamma$ be countably infinite groups. A finite subset $U\subseteq \Gamma$ is called a \emph{$(\Gamma, \Delta)$-UFO} if for any maximal $U$-spaced set $S\subseteq \Gamma$, we have that $S$ meets every right coset of $\Delta$ in $\Gamma$. 
	
	We say that the inclusion of groups $\Delta\subseteq \Gamma$ \emph{admits UFOs} if for every finite $U\subseteq \Gamma$, there is a finite $V\subseteq \Gamma$ with $V\supseteq U$ which is a $(\Gamma, \Delta)$-UFO.
\end{defin}
\vspace{2 mm}

As a word of caution, we note that the property of being a $(\Gamma, \Delta)$-UFO is not upwards closed.

The terminology comes from considering the case of a product group, i.e.\ $\Gamma = \bb{Z}\times \bb{Z}$ and $\Delta = \bb{Z}\times\{0\}$. Figure~\ref{Fig:UFO} depicts a typical UFO subset of $\bb{Z}\times \bb{Z}$.  

\begin{figure}
	\begin{center}
	\begin{tikzpicture}[scale=0.166]
		\foreach \x in {0,1,...,22} {
			\draw [fill=black,radius=0.07] (\x,8) circle;
		}
		\foreach \x in {38,39,...,60} {
			\draw [fill=black,radius=0.07] (\x,8) circle;
		}
		\clip [radius=7.5] (30,8) circle;
		\foreach \x in {23,24,...,37} {
			\foreach \y in {0,1,...,15} {
				\draw [fill=black,radius=0.07] (\x,\y) circle;
			}
		}
	\end{tikzpicture}
	\caption{Sighting in Roswell; a $(\bb{Z}\times \bb{Z}, \bb{Z}\times \{0\})$-UFO subset of $\bb{Z}\times \bb{Z}$.} \label{Fig:UFO}
\end{center}
\end{figure}
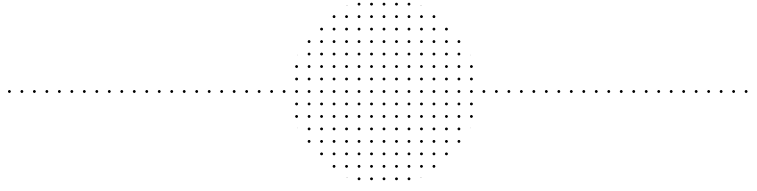


\begin{prop}
	\label{Prop:UFOsExist}
	Let $\Delta$ be a subgroup of $\Gamma$. If $|\bigcap_{u \in U} u \Delta u^{-1}|$ is infinite for every finite set $U \subseteq \Gamma$ then $\Delta \subseteq \Gamma$ admits UFOs. In particular, if $\Delta$ contains an infinite subgroup that is normal in $\Gamma$ then $\Delta\subseteq \Gamma$ admits UFOs.
\end{prop}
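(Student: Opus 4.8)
The plan is to reduce the UFO property to a single coset and then destroy any hypothetical bad maximal spaced set by a pigeonhole argument on a carefully chosen finite ``clique'' sitting inside $\Delta$. First I would record that, since being maximal $U$-spaced and meeting a given coset are both right-invariant, a finite symmetric $V\supseteq U$ is a $(\Gamma,\Delta)$-UFO if and only if every maximal $V$-spaced set $S$ meets the coset $\Delta 1_\Gamma=\Delta$ itself: if some maximal $V$-spaced set missed a coset $\Delta g$, then $Sg^{-1}$ would be a maximal $V$-spaced set missing $\Delta$. Given a finite $U$, I may assume $U=U^{-1}$ (replacing $U$ by $U\cup U^{-1}$), so that any maximal $V$-spaced set with $V\supseteq U$ symmetric is $V$-syndetic, i.e.\ $VS=\Gamma$.

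Next, set $W_0:=U\setminus\Delta$ and $M:=\bigcap_{w\in W_0\cup\{1_\Gamma\}} w^{-1}\Delta w$. Writing $w^{-1}\Delta w=u\Delta u^{-1}$ with $u=w^{-1}$, the hypothesis applied to the finite set $(W_0\cup\{1_\Gamma\})^{-1}$ shows that $M$ is infinite; note $M$ is a subgroup of $\Delta$. I would then pick any $K\subseteq M$ with $|K|=|W_0|+1$, and define $F$ to be the symmetric closure of $\{\,w\kappa\kappa'^{-1}w^{-1}:w\in W_0,\ \kappa\neq\kappa'\in K\,\}$. The point of taking $K\subseteq M$ is that $\kappa\kappa'^{-1}\in M\subseteq w^{-1}\Delta w$, so each generator $w\kappa\kappa'^{-1}w^{-1}$ lies in $\Delta$; hence $F$ is a finite symmetric subset of $\Delta$. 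Finally set $V:=U\cup F$, which is finite, symmetric, and contains $U$. The crucial bookkeeping point is that, because $F\subseteq\Delta$, we have $V\setminus\Delta=U\setminus\Delta=W_0$ exactly as before: enlarging $V$ inside $\Delta$ creates no new ``transverse'' elements.

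It then remains to verify $V$ is a UFO. Suppose toward a contradiction that $S$ is maximal $V$-spaced with $S\cap\Delta=\emptyset$. Since $VS=\Gamma$, every $\kappa\in K$ is covered: there are $s\in S$ and $v\in V$ with $s=v\kappa$; as $\kappa\in\Delta$ and $s\notin\Delta$, the element $v$ lies in $V\setminus\Delta=W_0$. This assigns to each $\kappa\in K$ some $w_\kappa\in W_0$ with $w_\kappa\kappa\in S$, and since $|K|=|W_0|+1$, pigeonhole yields distinct $\kappa\neq\kappa'\in K$ sharing a common $w:=w_\kappa=w_{\kappa'}$. Then $w\kappa$ and $w\kappa'$ are distinct elements of $S$, yet $(w\kappa)(w\kappa')^{-1}=w\kappa\kappa'^{-1}w^{-1}\in F\subseteq V$, contradicting that $S$ is $V$-spaced. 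The main obstacle the construction is designed to overcome is precisely this potential circularity: the spacing violation we need requires the conjugated elements $w\kappa\kappa'^{-1}w^{-1}$ to lie in $V$, but they must simultaneously be kept inside $\Delta$ so as not to alter $W_0=V\setminus\Delta$ (which governs how $\Delta$ gets covered); the hypothesis that $\bigcap_u u\Delta u^{-1}$ is infinite is exactly what supplies a clique $K$ inside the subgroup $M$ on which all conjugations by $W_0$ return into $\Delta$. The ``in particular'' is then immediate: if $N\subseteq\Delta$ is infinite and normal in $\Gamma$, then $N=w^{-1}Nw\subseteq w^{-1}\Delta w$ for every $w$, so $N\subseteq M$ and $M$ is infinite.
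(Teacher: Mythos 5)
Your proof is correct, and it runs in the opposite direction from the paper's. The paper argues by contrapositive: assuming no finite $V \supseteq U$ is a $(\Gamma,\Delta)$-UFO, it takes an arbitrary finite symmetric $D \subseteq \Delta$ containing $1_\Gamma$, sets $V = U \cup D^2$, extracts a maximal $V$-spaced set $S$ missing some coset $\Delta g$, and counts: writing $C = \bigcap_{u\in U} uDu^{-1}$, one has $Cg \subseteq US$, while each set $uS \cap Cg$ is $C^2$-spaced (because $u^{-1}C^2u \subseteq D^2 \subseteq V$) and hence at most a singleton, giving $|C| \leq |U|$. You instead construct the UFO directly: the infinite intersection $M = \bigcap_{w} w^{-1}\Delta w$ supplies a clique $K$ of size $|U\setminus\Delta|+1$, and you enlarge $U$ by the finitely many conjugates $w\kappa\kappa'^{-1}w^{-1} \in \Delta$, so that pigeonhole forces two elements of $K$ to be covered through the same $w \in U\setminus\Delta$, and the two covering elements of $S$ then violate $V$-spacedness. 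The combinatorial heart is the same in both arguments --- your inclusion $wKK^{-1}w^{-1} \subseteq \Delta$ is the mirror image of the paper's $u^{-1}C^2u \subseteq D^2$ --- but the packaging differs, and each buys something. Your version exhibits the witness $V$ explicitly, uses only $|U\setminus\Delta|+1$ elements of the intersection, and the right-translation reduction to the single coset $\Delta$ is a clean simplification absent from the paper. The paper's contrapositive yields a sharper quantitative dichotomy (if no $V \supseteq U$ is a UFO, then $|\bigcap_{u\in U}u\Delta u^{-1}| \leq |U|$), which is formally stronger than the proposition as stated.

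One small point to tidy: maximality of a $V$-spaced set $S$ only gives $VS \cup S = \Gamma$; to conclude $VS = \Gamma$ you need $1_\Gamma \in V$ (the paper's remark on syndeticity carries the same implicit assumption). This is harmless in your argument --- either put $1_\Gamma$ into $U$ at the outset, or note that $\kappa \in S$ would already contradict $S \cap \Delta = \emptyset$ --- but it should be said.
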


\begin{proof}
	We prove the contrapositive. So assume that $\Delta \subseteq \Gamma$ does not admit UFOs. Let $U \subseteq \Gamma$ be a finite symmetric set such that no finite $V \subseteq \Gamma$ containing $U$ is a $(\Gamma, \Delta)$-UFO. Let $D \subseteq \Delta$ be finite, symmetric, and contain the identity. It will suffice to show that $C = \bigcap_{u \in U} u D u^{-1}$ satisfies $|C| \leq |U|$.
	
	Set $V = U \cup D^2$. Since $V$ is not a $(\Gamma, \Delta)$-UFO, there is a maximal $V$-spaced set $S \subseteq \Gamma$ and $g \in \Gamma$ with $S \cap \Delta g = \varnothing$. Since $S$ is $V$-spaced and $u^{-1} C^2 u \subseteq D^2 \subseteq V$, the set $C_u = (u S) \cap (C g)$ is $C^2$-spaced for every $u \in U$. Of course, any $C^2$-spaced subset of $C g$ is empty or a singleton, so $|C_u| \leq 1$ for each $u \in U$. On the other hand, since $S$ is maximal we have $V S = \Gamma$, and since $S \cap \Delta g = \varnothing$ we must have $C g \subseteq U S$. Therefore $|C| = |C g| = \sum_{u \in U} |C_u| \leq |U|$. 
\end{proof}

In the spaces $\overline{\cal{S}}(k^\Gamma)$ and $\overline{\cal{S}}((2^\bb{N})^\Gamma)$, the minimal flows form a dense $G_\delta$. However, when $\Delta\subseteq \Gamma$ is a subgroup, we can ask about the properties of members of $\overline{\cal{S}}(k^\Gamma)$ and $\overline{\cal{S}}((2^\bb{N})^\Gamma)$ viewed as $\Delta$-flows.
\vspace{2 mm}

\begin{defin}
	\label{Def:Min}
	Given a subshift $X\subseteq k^\Gamma$ and a finite $E\subseteq \Gamma$, we say that $X$ is \emph{$(\Delta, E)$-minimal} if for every $x\in X$ and every $p\in P_E(X)$, there is $g\in \Delta$ with $(gx)|_E = p$. Given a subflow $X\subseteq (2^\bb{N})^\Gamma$ and $n\in \bb{N}$, we say that $X$ is \emph{$(\Delta, E, n)$-minimal} if $\overline{\pi}_n(X)\subseteq (2^n)^\Gamma$ is $(\Delta, E)$-minimal. When $\Delta = \Gamma$, we simply say that $X$ is \emph{$E$-minimal} or \emph{$(E, n)$-minimal}.
\end{defin}
\vspace{2 mm}

The set of $(\Delta, E)$-minimal flows is open in $\mathrm{Sub}(k^\Gamma)$, and $X\subseteq k^\Gamma$ is minimal as a $\Delta$-flow iff it is $(\Delta, E)$-minimal for every finite $E\subseteq \Gamma$. Similarly, the set of $(\Delta, E, n)$-minimal flows is open in $\mathrm{Sub}((2^\bb{N})^\Gamma)$, and $X\subseteq (2^\bb{N})^\Gamma$ is minimal as a $\Delta$-flow iff it is $(\Delta, E, n)$ minimal for every finite $E\subseteq \Gamma$ and every $n\in \bb{N}$. 

In the proof of Proposition~\ref{Prop:UFOMinimal}, it will be helpful to extend conventions about the shift action to subsets of $\Gamma$. If $U\subseteq \Gamma$, $g\in G$, and $p\in k^U$, we write $g{\cdot}p\in k^{Ug^{-1}}$ for the function where given $h\in U$, we have $(g{\cdot}p)(h) = p(hg)$. 
\vspace{2 mm}

\begin{prop}
	\label{Prop:UFOMinimal}
	Suppose $\Delta\subseteq \Gamma$ are countably infinite groups and that the inclusion $\Delta\subseteq \Gamma$ admits UFOs. Then the set $\{X\in \overline{\cal{S}}(k^\Gamma): X \text{ is minimal as a $\Delta$-flow}\}$ is a dense $G_\delta$ subset. Similarly, the set $\{X\in \overline{\cal{S}}(2^\bb{N})^\Gamma: X\text{ is minimal as a $\Delta$-flow}\}$ is a dense $G_\delta$ subset.
\end{prop}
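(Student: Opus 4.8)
The plan is to treat the two assertions uniformly, reducing both to a single density statement for a fixed finite window and then invoking the Baire category theorem. Recall from the discussion after Definition~\ref{Def:Min} that for each finite $E\subseteq\Gamma$ (and, in the Cantor case, each $n\in\bb{N}$) the set of $(\Delta,E)$-minimal (resp.\ $(\Delta,E,n)$-minimal) subshifts is open, and that a subshift is minimal as a $\Delta$-flow exactly when it is $(\Delta,E)$-minimal for every finite $E$ (resp.\ $(\Delta,E,n)$-minimal for every finite $E$ and every $n$). Since $\overline{\cal{S}}(k^\Gamma)$ and $\overline{\cal{S}}((2^\bb{N})^\Gamma)$ are closed subspaces of Polish spaces, hence Polish, the $G_\delta$ claim is immediate, and it suffices to prove that each of these open sets is dense. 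As $\cal{S}$ is dense in $\overline{\cal{S}}$, I would fix a strongly irreducible $X$, a finite window $F$, and the target window $E$, and produce a strongly irreducible $Y$ in the basic neighbourhood of $X$ determined by $F$ that is $(\Delta,E)$-minimal (resp.\ $(\Delta,E,n)$-minimal); the full statement then follows from Baire together with the topological bookkeeping used in Proposition~\ref{Prop:CantorFree}.

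The core construction adapts the marker-and-catalogue scheme behind the density of minimal shifts in \cite{FTVF}, with the UFO property of Definition~\ref{Def:UFO} supplying the one new ingredient. Assume $X$ is $U$-irreducible with $U$ symmetric and $U\supseteq E\cup F\cup\{1_\Gamma\}$. Using the hypothesis that $\Delta\subseteq\Gamma$ admits UFOs, I would fix a $(\Gamma,\Delta)$-UFO $V\supseteq U$ large enough to absorb the finite bookkeeping sets described below, and consider configurations whose \emph{marker set} is a maximal $V$-spaced subset $S\subseteq\Gamma$. As noted in the Background, ``maximal $V$-spaced'' is the conjunction of the $V$-spaced condition and $(V\cup V^{-1})$-syndeticity, both of which forbid finite patterns; hence it defines a subshift, and the defining UFO property then holds \emph{pointwise}: in every configuration of $Y$, the marker set $S$ meets the coset $\Delta=\Delta\cdot 1_\Gamma$.

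To force $(\Delta,E)$-minimality I would attach to every marker a \emph{complete catalogue}. Enumerate $P_E(X)=\{p_1,\dots,p_N\}$ and fix finitely many $h_1,\dots,h_N\in\Delta$ spread far apart in $\Delta$ (possible since $\Delta$ is infinite). The local rule is: around a marker at $s$, the window $E h_i s$ displays the pattern $p_i$ for each $i$. Crucially, the relative offset between two catalogue windows of a common marker, namely $h_i h_j^{-1}$, does not depend on $s$; so the catalogue windows of a single marker can be made pairwise $U$-apart by choosing the $h_i$ suitably, while catalogue windows attached to distinct (hence $V$-spaced) markers are $U$-apart once $V$ is enlarged to contain a finite set determined by $U$, $E$, and the $h_i$. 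The $U$-irreducibility of $X$, applied greedily, then lets me fill every catalogue window with its prescribed pattern and complete the remaining cells to a legal $X$-content. In the Cantor case I would carry the $X$-content in the coordinates projected by $\overline{\pi}_n$ and record the markers in higher coordinates, exactly as $\mathrm{Color}(D,m)$ is used in the proof of Proposition~\ref{Prop:CantorFree}, so that $Y$ lands in the prescribed neighbourhood of $X$; in the finite-alphabet case the markers are encoded as distinguished legal configurations as in \cite{FTVF}. Now $(\Delta,E)$-minimality is transparent: given $y\in Y$ and a target $p_i$, pick any marker $s\in S\cap\Delta$; then $h_i s\in\Delta$ and $((h_i s){\cdot}y)|_E=p_i$.

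I expect the main obstacle to be simultaneously maintaining strong irreducibility of $Y$ and the completeness of the catalogues at \emph{every} marker. Demanding a full catalogue at each of a syndetic set of markers imposes dense global constraints, so the irreducibility constant for $Y$ must be chosen after --- and strictly larger than --- $V$, the catalogue offsets $\{h_i\}$, and the window $E$; verifying that two configurations agreeing on $D$-apart sets can be amalgamated then amounts to showing that a partial marker configuration always extends to a global maximal $V$-spaced set carrying consistent catalogues, which is precisely where the constants and the one-sided (right-invariant) nature of spacing must be handled with care. The role of the UFO hypothesis is exactly to convert the $\Gamma$-syndeticity of the markers into the coset-meeting property that makes a single catalogue $\Delta$-reachable from every point of $Y$; everything else is a quantitative elaboration of the minimal-density argument of \cite{FTVF}.
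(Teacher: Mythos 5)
Your proposal follows essentially the same route as the paper's proof: markers forming a maximal $V$-spaced set with $V$ a $(\Gamma,\Delta)$-UFO, a single pattern carrying a complete catalogue of $P_E(X)$ at offsets lying in $\Delta$, $(\Delta,E)$-minimality from the fact that every maximal $V$-spaced set meets $\Delta$, and strong irreducibility of $Y$ via amalgamation of marker sets using the $D$-irreducibility of $X$, with the constants chosen in exactly the order you indicate. The differences are presentational rather than substantive --- the paper defines $Y=\{y\in X:\exists\text{ a maximal $V$-spaced }T\ \forall g\in T\,(g{\cdot}y)|_U=\alpha\}$ instead of recording markers in extra coordinates, and it takes the neighbourhood window equal to the minimality window $E$ so that the catalogue itself yields $P_E(Y)=P_E(X)$ --- and the irreducibility verification you flag as the main obstacle is carried out in the paper exactly along the lines you sketch.
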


\begin{proof}
	 We give the arguments for $k^\Gamma$, as those for $(2^\bb{N})^\Gamma$ are very similar.
	 
	 It suffices to show for a given finite $E\subseteq \Gamma$ that the collection of $(\Delta, E)$-minimal flows is dense in $\overline{\cal{S}}(k^\Gamma)$. By enlarging $E$ if needed, we can assume that $E$ is symmetric. 
	
	Consider a non-empty open $O\subseteq \overline{\cal{S}}(k^\Gamma)$. By shrinking $O$ and/or enlarging $E$ if needed, we can assume that for some $X\in \cal{S}(k^\Gamma)$, we have $O = N_E(X)\cap \overline{\cal{S}}(k^\Gamma)$. We will build a $(\Delta, E)$-minimal shift $Y$ with $Y\in N_E(X)\cap \cal{S}(k^\Gamma)$. Fix a finite symmetric $D\subseteq \Gamma$ so that $X$ is $D$-irreducible. Then fix a finite $U\subseteq \Gamma$ which is large enough to contain an $EDE$-spaced set $Q\subseteq U\cap \Delta$ of cardinality $|P_E(X)|$, and enlarging $U$ if needed, choose such a $Q$ with $EQ\subseteq U$. Fix a bijection $S\to P_E(X)$ by writing $P_E(X) = \{p_g: g\in A\}$. Because $X$ is $D$-irreducible, we can find $\alpha\in P_U(X)$ so that $(gq)|_E = p_g$ for every $g\in Q$. By Proposition~\ref{Prop:UFOsExist}, fix a finite $V\subseteq \Gamma$ with $V\supseteq UDU$ which is a $(\Gamma, \Delta)$-UFO. We now form the shift
	$$Y = \{y\in X: \exists\text{ a max.\ $V$-spaced set $T$ so that }\forall g\in T\, (g{\cdot}y)|_U = \alpha\}.$$
	Because $V = UDU$, we have that $Y\neq \emptyset$. In particular, for any maximal $V$-spaced set $T\subseteq \Gamma$, we can find $y\in Y$ so that $(gy)|_U = \alpha$ for every $g\in T$. We also note that $Y\in N_E(X)$ by our construction of $\alpha$. 
	
	To see that $Y$ is $(\Delta, E)$-minimal, fix $y\in Y$ and $p\in P_E(Y)$. Suppose this is witnessed by the maximal $V$-spaced set $T\subseteq \Gamma$. Because $V$ is a $(\Gamma, \Delta)$-UFO, find $h\in \Delta\cap T$. So $(hy)|_U = \alpha$. Now suppose $g\in Q$ is such that $p = p_g$. We have $(ghy)|_E = (g\cdot((hy)|_U)|_E = p_g$. 
	
	To see that $Y\in \cal{S}(k^\Gamma)$, we will show that $Y$ is $DUVUD$-irreducible. Suppose $y_0, y_1\in Y$ and $S_0, S_1\subseteq \Gamma$ are $DUVUD$-apart. For each $i< 2$, fix $T_i\subseteq \Gamma$ a maximal $V$-spaced set which witnesses that $y_i$ is in $Y$. Set $B_i = \{g\in T_i: DUg\cap S_i\neq \emptyset\}$. Notice that $B_i\subseteq UDS_i$. It follows that $B_0\cup B_1$ is $V$-spaced, so extend to a maximal $V$-spaced set $B$. It also follows that $S_i\cup UB_i\subseteq U^2DS_i$. Since $V\supseteq UDU$ and by the definition of $B_i$, the collection of sets $\{S_i\cup UB_i: i< 2\}\cup \{Ug: g\in B\setminus (B_0\cup B_1)\}$ is pairwise $D$-apart. By the $D$-irreducibility of $X$, we can find $y\in X$ with $y|_{S_i\cup UB_i} = y_i|_{S_i\cup UB_i}$ for each $i< 2$ and with $(gy)|_U = \alpha$ for each $g\in B\setminus (B_0\cup B_1)$. Since $B_i\subseteq T_i$, we actually have $(gy)|_U = \alpha$ for each $g\in B$. So $y\in Y$ and $y|_{S_i} = y_i|_{S_i}$ as desired.
\end{proof}
\vspace{2 mm}

\begin{proof}[Proof of Theorem~\ref{Thm:MinSubDyn}]
	Proposition~\ref{Prop:UFOMinimal} tells us that the generic member of $\overline{\cal{S}}((2^\bb{N})^\Gamma)$ is minimal as a $\Delta_n$-flow for each $n\in \bb{N}$, and by Proposition~\ref{Prop:CantorFree}, the generic member of $\overline{\cal{S}}((2^\bb{N})^\Gamma$ is free.
\end{proof}
\vspace{2 mm}

In contrast to Theorem~\ref{Thm:CharMeas}, the next example shows that Question~\ref{Que:MinSubdyn} is non-trivial to answer in full generality.
\vspace{2 mm}

\begin{theorem}
	Let $G=\sum_\bb{N} (\mathbb{Z}/2\mathbb{Z})$ and let $X$ be a $G$ flow with infinite underlying space. Then there exists an infinite subgroup $H$ such that $X$ is not minimal as an $H$ flow.
\end{theorem}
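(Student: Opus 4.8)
The plan is to exhibit a single point whose orbit under a suitable infinite $H$ is non-dense; since a flow is minimal iff every orbit is dense, this shows $X$ is not $H$-minimal. Concretely, I would build $H$ one generator at a time so that the orbit of a fixed, well-chosen point $x_0$ stays trapped in a ball of radius strictly less than $\mathrm{diam}(X)$.

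First I would fix a uniformly recurrent (almost periodic) point $x_0\in X$, i.e.\ one whose orbit closure $M=\overline{Gx_0}$ is minimal; such a point exists in any flow by Zorn's lemma applied to closed invariant sets. Since $X$ is infinite I may fix $y\neq x_0$, set $r=d(x_0,y)/2>0$, and choose positive reals $\eta_0,\eta_1,\dots$ with $\sum_k\eta_k<r$. Thinking of $G$ as an $\mathbb{F}_2$-vector space, I then choose linearly independent $g_1,g_2,\dots\in G$ inductively. Writing $W_k=\langle g_1,\dots,g_k\rangle$ (a \emph{finite} subgroup, as $G$ is locally finite) and $O_k=W_k x_0$, I maintain the invariant $O_k\subseteq B(x_0,r_k)$ with $r_k=\sum_{j<k}\eta_j$. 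To pass from $k$ to $k+1$ I need $g_{k+1}\notin W_k$ that simultaneously returns every point of the finite set $O_k$ to within $\eta_k$ of itself; then $g_{k+1}O_k\subseteq B(x_0,r_k+\eta_k)=B(x_0,r_{k+1})$, and since $W_{k+1}=W_k\cup g_{k+1}W_k$ yields $O_{k+1}=O_k\cup g_{k+1}O_k$, the invariant persists. Choosing $g_{k+1}\notin W_k$ keeps the generators independent, so $H=\langle g_k:k\in\mathbb{N}\rangle$ is infinite; and $Hx_0=\bigcup_k O_k\subseteq B(x_0,r)$, whose closure omits $y$, so $X$ is not $H$-minimal.

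The one genuinely nontrivial step — and the one I expect to be the main obstacle — is producing such a $g_{k+1}$: I must know that the \emph{simultaneous} return-time set $\{g\in G:\ d(g\cdot wx_0,\,wx_0)<\eta_k\ \text{for all }w\in W_k\}$ is infinite (in fact syndetic). Almost periodicity of each individual point $wx_0$ only makes each single return-time set syndetic, and syndeticity is not preserved under finite intersection, so this requires genuine \emph{joint} recurrence. I would extract it from the orbit-closure embedding $\Phi:M\to X^G$, $\Phi(z)=(gz)_{g\in G}$, which is a topological conjugacy of $M$ onto a minimal subsystem $\Phi(M)\subseteq X^G$ under the shift action. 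Uniform recurrence of $\Phi(x_0)$ applied to the finite window $W_k$ makes $\{g:\ d((wg)x_0,\,wx_0)<\eta_k\ \forall w\in W_k\}$ syndetic; since $G$ is abelian, $(wg)x_0=g(wx_0)$, which is exactly the desired simultaneous return set. As $W_k$ is finite while this set is infinite, a legitimate choice $g_{k+1}\notin W_k$ always exists, closing the induction.

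It is worth noting that this argument uses only that $G$ is abelian (for $(wg)x_0=g(wx_0)$) and locally finite (so that each $W_k$ is finite and can be avoided inside an infinite syndetic set); the same proof therefore applies verbatim to any infinite locally finite abelian group, with $\sum_\mathbb{N}(\mathbb{Z}/2\mathbb{Z})$ as the guiding instance.
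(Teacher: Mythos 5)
Your proof is correct, but it takes a genuinely different route from the paper's. The paper argues dually, through closed invariant sets rather than orbits: after reducing to the case where $X$ is $G$-minimal, it fixes a proper closed $K_0 \subsetneq X$ with non-empty interior and inductively sets $K_{n+1} = g_n K_n \cap K_n$, where $g_n \neq 1_G$ is chosen from the set $S_n = \{g \in G : \mathrm{Int}(gK_n \cap K_n) \neq \emptyset\}$, which is infinite by minimality; since $g_n^2 = 1_G$ and $G$ is abelian, each $K_{n+1}$ remains invariant under $\langle g_0, \dots, g_n\rangle$, so $K = \bigcap_n K_n$ is a non-empty, proper, closed set invariant under $H = \langle g_n : n \in \mathbb{N}\rangle$. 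Your argument instead traps the $H$-orbit of a uniformly recurrent point $x_0$ in a small ball, and its engine is joint recurrence. Incidentally, your $X^G$-embedding trick can be phrased more directly: since the finitely many maps $z \mapsto wz$ ($w \in W_k$) are continuous, the set $V = \bigcap_{w \in W_k} \{z \in M : d(wz, wx_0) < \eta_k\}$ is an open neighbourhood of $x_0$ in $M$, and your simultaneous return set is exactly $N(x_0, V)$, which is syndetic by minimality of $M$; the shift-embedding is a correct but roundabout way of saying this. Both proofs consume the same algebraic inputs (exponent $2$, or local finiteness, plus commutativity) and the same dynamical fact that return sets in minimal flows are infinite; yours needs the finitary joint version for several points at once, while the paper needs only the return of one open set to itself.

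Two smaller remarks. First, your proof uses a metric throughout ($r = d(x_0,y)/2$, the balls $B(x_0,r_k)$, the triangle inequality), so as written it requires $X$ to be metrizable, whereas the paper's proof works for an arbitrary compact Hausdorff flow. This is repairable --- replace the $\eta_k$-balls by open symmetric entourages $E_k$ of the unique uniformity with $E_{k+1} \circ E_{k+1} \subseteq E_k$ and $y \notin \overline{E_0[x_0]}$, so that every finite composition $E_1 \circ \cdots \circ E_k$ stays inside $E_0$ --- but it is a genuine extra step if ``flow'' is not assumed metrizable. Second, a point in your favour: by insisting $g_{k+1} \notin W_k$ you explicitly guarantee that $H$ is infinite, a detail the paper's choice $g_n \in S_n \setminus \{1_G\}$ leaves implicit (one should pick $g_n \notin \langle g_0, \dots, g_{n-1}\rangle$, which is possible precisely because $S_n$ is infinite and the previously generated subgroup is finite).
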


\begin{proof}
	We may assume that $X$ is a minimal $G$-flow, as otherwise we may take $H = G$. We construct a sequence $X\supsetneq K_0\supseteq K_1\supseteq\cdots$ of proper, non-empty, closed subsets of $X$ and a sequence of group elements $\{g_n: n\in \bb{N}\}$ so that by setting $K = \bigcap_\bb{N} K_n$ and $H = \langle g_n: n\in \bb{N}\rangle$, then $K$ will be a minimal $H$-flow. Start by fixing a closed, proper subset $K_0\subsetneq X$ with non-empty interior. Suppose $K_n$ has been created and is $\langle g_0,...,g_{n-1}\rangle$-invariant. As $X$ is a minimal $G$-flow, the set $S_n:= \{g\in G: \mathrm{Int}(gK_n\cap K_n)\neq \emptyset\}$ is infinite. Pick any $g_n\in S_n\setminus\{1_G\}$, and set $K_{n+1} = g_nK_n\cap K_n$. As $g_n^2 = 1_G$, we see that $K_{n+1}$ is $g_n$-invariant, and as $G$ is abelian, we see that $K_{n+1}$ is also $g_i$-invariant for each $i< n$. It follows that $K$ will be $H$-invariant as desired. 
\end{proof}
\vspace{2 mm}

Before moving on, we give a conditional proof of Theorem~\ref{Thm:CharMeas}, which works as long as some non-amenable group admits a strongly irreducible shift without an invariant measure. It is the inspiration for our overall construction.
\vspace{2 mm}

\begin{prop}
	\label{Prop:HypConstruction}
	Let $G$ and $H$ be countably infinite groups, and suppose that for some $k< \omega$ and some strongly irreducible flow $Y\subseteq k^H$ that $Y$ does not admit an $H$-invariant measure. Then there is a minimal $G$-flow which does not admit a characteristic measure.
\end{prop}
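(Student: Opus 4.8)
The plan is to combine the hypothesized flow $Y \subseteq k^H$ with the genericity machinery for the product group $\Gamma = G \times H$. First I would form the product group $\Gamma = G\times H$, identify both $G$ and $H$ with their canonical copies as subgroups of $\Gamma$, and work inside $\overline{\cal{S}}(k^\Gamma)$ (or the Cantor-space version $\overline{\cal{S}}((2^\bb{N})^\Gamma)$ if I need freeness). The strategy is to produce a $\Gamma$-flow $X$ that is (i) minimal as a $G$-flow, and (ii) admits a factor map onto a flow on which $H$ acts without an invariant measure. Condition (i) will make $X$ a minimal $G$-flow; I then want to argue that any $\aut(X,G)$-invariant measure would push forward to an $H$-invariant measure somewhere, contradicting the absence of such measures coming from $Y$.

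\medskip

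\noindent The key steps, in order:

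\emph{Step 1 (Genericity for $G$-minimality).} Since $G$ is a subgroup of $\Gamma = G\times H$ and $G$ is normal in $\Gamma$, the inclusion $G \subseteq \Gamma$ admits UFOs by Proposition~\ref{Prop:UFOsExist}. Hence by Proposition~\ref{Prop:UFOMinimal}, the $G$-minimal shifts form a dense $G_\delta$ subset of $\overline{\cal{S}}(k^\Gamma)$. This gives an abundant supply of $G$-minimal shifts.

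\emph{Step 2 (Incorporating $Y$).} I would arrange that the generic $G$-minimal shift $X$ in $\overline{\cal{S}}(k^\Gamma)$ also factors $H$-equivariantly onto $Y$. The cleanest route is to note that ``$\overline{\pi}$ projects onto (a shift $H$-isomorphic to) $Y$'' or, more robustly, ``the $H$-reduct admits no $H$-invariant measure'' is a Vietoris-open condition, and then show that the strongly irreducible shifts $X$ witnessing this open condition are dense. Concretely, starting from any $X_0\in \cal{S}(k^\Gamma)$ one can take a product-type or coupling construction of $X_0$ with a copy of $Y$ spread along the $H$-cosets, remaining strongly irreducible (strong irreducibility is preserved under products, as used in the proof of Proposition~\ref{Prop:CantorFree}), so that the $H$-action on the resulting shift has no invariant measure. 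Intersecting the dense $G_\delta$ of Step 1 with this dense open (or dense $G_\delta$) set yields a shift $X$ that is simultaneously $G$-minimal and has $H$-reduct with no invariant measure.

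\emph{Step 3 (From no $H$-invariant measure to no characteristic measure).} Here is where the product structure does the real work: for $x\in X$ and $h\in H$, the map $x\mapsto hx$ commutes with the $G$-action (since $G$ and $H$ commute in $\Gamma$), so each $h\in H$ gives an element of $\aut(X,G)$. Thus any characteristic (i.e.\ $\aut(X,G)$-invariant) measure $\mu$ on $X$ is in particular $H$-invariant. But Step 2 guarantees $X$ has no $H$-invariant measure, so $X$ admits no characteristic measure. Since $X$ is minimal as a $G$-flow, it is the desired minimal $G$-flow without a characteristic measure.

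\medskip

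\noindent I expect the main obstacle to be Step 2: arranging a \emph{strongly irreducible} shift over $\Gamma$ whose $H$-reduct genuinely inherits the absence of invariant measures from $Y$, while remaining inside (the closure of) $\cal{S}(k^\Gamma)$ and keeping the density argument intact. One must be careful that embedding $Y$ along $H$-cosets in a $\Gamma$-equivariant way does not accidentally create $H$-invariant measures through the $G$-direction, and that the $H$-reduct of the coupling really maps onto $Y$ rather than onto some amenable quotient. The commutation trick in Step 3 and the genericity in Step 1 are routine once the framework is set up; the coupling construction and verification of strong irreducibility for the $\Gamma$-flow built from $Y$ is the delicate part.
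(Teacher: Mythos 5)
Your proposal follows the same route as the paper: work in $\overline{\cal{S}}(k^{G\times H})$, get $G$-minimality generically from Proposition~\ref{Prop:UFOsExist} and Proposition~\ref{Prop:UFOMinimal}, observe that absence of an $H$-invariant measure is a Vietoris-open condition, and finish by noting that $H$ acts on any such shift by $G$-flow automorphisms, so a characteristic measure would be $H$-invariant. Your Steps 1 and 3 are exactly the paper's argument.

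The one place you diverge---and precisely the place you flag as the main obstacle---is Step 2, where you are working harder than necessary, in a direction that is actually problematic. You do not need the shifts without $H$-invariant measures to be \emph{dense}: a dense $G_\delta$ set meets every non-empty open set, so it suffices to exhibit a \emph{single} strongly irreducible shift in $k^{G\times H}$ with no $H$-invariant measure, and this is handed to you by the hypothesis. The paper takes $Z = k^G\times Y$, viewed as the subshift $\{z\in k^{G\times H}: z_g\in Y \text{ for all } g\in G\}$ where $z_g(h) = z(g,h)$: it is strongly irreducible because $Y$ is (patching can be done coset-by-coset), and the $H$-equivariant projection $z\mapsto z_{1_G}$ onto $Y$ pushes any $H$-invariant measure on $Z$ forward to one on $Y$, so $Z$ admits none---this also disposes of your worry about ``accidentally creating $H$-invariant measures through the $G$-direction.'' Then the set of shifts with no $H$-invariant measure meets $\overline{\cal{S}}(k^{G\times H})$ in a non-empty open set, which the dense $G_\delta$ of $G$-minimal shifts must intersect. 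By contrast, your density claim via ``coupling constructions'' runs into a concrete difficulty in the fixed-alphabet setting: the product of two subshifts of $k^\Gamma$ lives in $(k^2)^\Gamma$, not in $k^\Gamma$, so ``strong irreducibility is preserved under products'' does not let you approximate an arbitrary $X_0\in \cal{S}(k^\Gamma)$ by measure-free shifts \emph{in the same space} $\mathrm{Sub}(k^\Gamma)$; whether that density statement even holds is unclear, and, fortunately, irrelevant to the proposition.
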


\begin{proof}
	Viewing $Z = k^G\times Y$ as a subshift of $k^{G\times H}$, then $Z$ is strongly irreducible and does not admit an $H$-invariant probability measure. The property of not possessing an $H$-invariant measure is an open condition in $\mathrm{Sub}(k^{G\times H})$. By Proposition~\ref{Prop:UFOMinimal}, we can therefore find $X\subseteq k^{G\times H}$ which is minimal as a $G$-flow and which does not admit an $H$-invariant measure. As $H$ acts by $G$-flow automorphisms on $X$, we see that $X$ does not admit a characteristic measure.
\end{proof}
\vspace{2 mm}

Unfortunately, the question of if there exists any countable group $H$ and a strongly irreducible $H$-subshift $Y$ with no $H$-invariant measure is an open problem. Therefore our construction proceeds by considering the free group $F_2$ and defining a suitable weakening of strongly irreducible subshift which is strong enough for $G$-minimality to be generic in $(G\times F_2)$-subshifts, but weak enough for subshifts without $F_2$-invariant measures to exist.

\section{Variants of strong irreducibility}

In this section, we investigate a weakening of strong irreducibility that one can define given any right-invariant collection $\cal{B}$ of finite subsets of a given countable group. For our overall construction, we will consider $F_2$ and $G\times F_2$, but we give the definitions for any countably infinite group $\Gamma$. Write $\cal{P}_f(\Gamma)$ for the collection of finite subsets of $\Gamma$.
\vspace{2 mm}

\begin{defin}
	\label{Defin:IrreducibleWRTExhaustion}
	Fix a right-invariant subset $\cal{B}\subseteq \cal{P}_f(\Gamma)$. Given $k\in \bb{N}$, we say that a subshift $X\subseteq k^\Gamma$ is \emph{$\cal{B}$-irreducible} if there is a finite $D\subseteq \Gamma$ so that for any $m< \omega$, any $B_0,..., B_{m-1}\in \cal{B}$, and any $x_0,...,x_{m-1}\in X$, if the sets $\{B_0,..., B_{m-1}\}$ are pairwise $D$-apart, then there is $y\in X$ with $y|_{B_i} = x_i|_{B_i}$ for each $i< m$. We call $D$ the \emph{witness} to $\cal{B}$-irreducibility. If we have $D$ in mind, we can say that $X$ is \emph{$\cal{B}$-$D$-irreducible}.
	
	We call a subflow $X\subseteq (2^\bb{N})^\Gamma$ is \emph{$\cal{B}$-irreducible} if for each $k\in \bb{N}$, the subshift $\overline{\pi}_k(X)\subseteq (2^k)^\Gamma$ is $\cal{B}$-irreducible.
	
	We write $\cal{S}_{\cal{B}}(k^\Gamma)$ or $\cal{S}_{\cal{B}}((2^\bb{N})^\Gamma)$ for the set of $\cal{B}$-irreducible subflows of $k^\Gamma$ or $(2^\bb{N})^\Gamma$, respectively, and we write $\overline{\cal{S}}_{\cal{B}}(k^\Gamma)$ or $\overline{\cal{S}}_{\cal{B}}((2^\bb{N})^\Gamma)$ for the Vietoris closures.
\end{defin}
\vspace{2 mm}

\begin{rem}\mbox{}
	\vspace{-3 mm}
	
	\begin{enumerate}
	\item 
	If $\cal{B}$ is closed under unions, it is enough to consider $m = 2$. However, this will often not be the case. 
	\item
	By compactness, if $X\subseteq k^\Gamma$ is $\cal{B}$-$D$-irreducible, $\{B_n: n< \omega\}\subseteq \cal{B}$ is pairwise $D$-apart, and $\{x_n: n< \omega\}\subseteq X$, then there is $y\in X$ with $y|_{B_i} = x_i|_{B_i}$.
	\item
	If $\cal{B}\subseteq \cal{B}'$, then $\cal{S}_{\cal{B}'}(k^\Gamma)\subseteq \cal{S}_{\cal{B}}(k^\Gamma)$ and $\cal{S}_{\cal{B}'}((2^\bb{N})^\Gamma)\subseteq \cal{S}_{\cal{B}}((2^\bb{N})^\Gamma)$
	\end{enumerate}
\end{rem}
\vspace{2 mm}

When $\cal{B}$ is the collection of all finite subsets of $H$, then we recover the notion of a strongly irreducible shift. Again, we consider Cantor space alphabets to obtain freeness.
\vspace{2 mm}

\begin{prop}
	\label{Prop:CantorFreeGeneral}
	For any right-invariant collection $\cal{B}\subseteq \cal{P}_f(\Gamma)$, the generic member of $\overline{\cal{S}}_{\cal{B}}((2^\bb{N})^\Gamma)$ is free.
\end{prop}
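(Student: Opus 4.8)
The plan is to follow the proof of Proposition~\ref{Prop:CantorFree} almost verbatim, replacing $\overline{\cal{S}}((2^\bb{N})^\Gamma)$ by $\overline{\cal{S}}_{\cal{B}}((2^\bb{N})^\Gamma)$ throughout, and upgrading the one step that used strong irreducibility. Recall that that argument splits into two parts. The first shows that freeness is a $G_\delta$ condition, and it takes place entirely in the ambient space $\mathrm{Sub}((2^\bb{N})^\Gamma)$: it uses only that $\Omega = \{(X,x): x\in X\}$ is closed, that the projection to the first coordinate is open, and that each $\{gx\neq x\}$ is open. This makes no reference to irreducibility. Since $\overline{\cal{S}}_{\cal{B}}((2^\bb{N})^\Gamma)$ is a subspace of $\mathrm{Sub}((2^\bb{N})^\Gamma)$, intersecting the relevant $G_\delta$ set with this subspace keeps it $G_\delta$, so this part transfers with no change.

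The second part is density. Fixing $g\in\Gamma\setminus\{1_\Gamma\}$, I would show that the $g$-free shifts are dense in $\cal{S}_{\cal{B}}((2^\bb{N})^\Gamma)$, and then intersect over all $g$. Given $X\in\cal{S}_{\cal{B}}((2^\bb{N})^\Gamma)$, $k<\omega$, and finite $U\subseteq\Gamma$, I would reuse the exact construction from Proposition~\ref{Prop:CantorFree}: choose a finite symmetric $D$ containing $g$ and $1_\Gamma$, set $m=|D|$, form $\mathrm{Color}(D,m)$ (nonempty, strongly irreducible, and with $g$ acting freely), inject it into coordinates $\{k,\dots,\ell-1\}$, and set $Y=\overline{\pi}_k(X)\times\mathrm{Color}(D,m)\subseteq(2^\ell)^\Gamma\subseteq(2^\bb{N})^\Gamma$. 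Exactly as before, $g$ acts freely on $Y$ because it does so on the $\mathrm{Color}(D,m)$ factor, and $\overline{\pi}_k(Y)=\overline{\pi}_k(X)$, so $Y$ lies in the prescribed basic open set. The only genuinely new point is to check that $Y\in\cal{S}_{\cal{B}}((2^\bb{N})^\Gamma)$, i.e.\ that $Y$ is $\cal{B}$-irreducible.

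This reduces to two routine claims. First, a strongly irreducible shift is $\cal{B}$-irreducible for every right-invariant $\cal{B}$: if $Z$ is $V$-irreducible in the ordinary two-set sense, then given pairwise $V$-apart $B_0,\dots,B_{m-1}\in\cal{B}$ one builds a common extension by induction on $m$, using that whenever these sets are pairwise $V$-apart, the single set $B_0$ and the union $B_1\cup\dots\cup B_{m-1}$ are themselves $V$-apart. Here it is essential that ordinary $V$-irreducibility quantifies over \emph{all} subsets of $\Gamma$, so these unions, which need not belong to $\cal{B}$, cause no difficulty; in particular $\mathrm{Color}(D,m)$ is $\cal{B}$-irreducible. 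Second, a product of two $\cal{B}$-irreducible shifts on the two coordinate blocks of the alphabet is again $\cal{B}$-irreducible, with witness the union of the two witnesses: one solves the $\cal{B}$-irreducibility problem in each factor separately and pairs the solutions. Applying these to the finite-alphabet shifts $\overline{\pi}_j(Y)$, each of which is a product of a projection of $\overline{\pi}_k(X)$ (which is $\cal{B}$-irreducible, being a coordinate factor of the $\cal{B}$-irreducible $X$) with a projection of $\mathrm{Color}(D,m)$, shows that $Y$ is $\cal{B}$-irreducible.

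The main, though still routine, obstacle is the first of these claims: bootstrapping two-set strong irreducibility up to the $m$-set condition demanded by $\cal{B}$-irreducibility. The induction is immediate, but it relies crucially on strong irreducibility being unconstrained by $\cal{B}$ — which is precisely why the construction can get away with the simple, manifestly strongly irreducible padding shift $\mathrm{Color}(D,m)$ rather than anything tailored to $\cal{B}$.
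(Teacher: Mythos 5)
Your proposal is correct and follows essentially the same route as the paper's own proof: both reduce to the observation that the Proposition~\ref{Prop:CantorFree} argument only needs the class $\cal{S}_{\cal{B}}$ to be closed under alphabet-wise products and to contain the shifts $\mathrm{Color}(D,m)$, the latter because strong irreducibility implies $\cal{B}$-irreducibility for any right-invariant $\cal{B}$. The only difference is that you spell out the induction on $m$ behind that last implication (exploiting that ordinary $V$-irreducibility quantifies over arbitrary, not just $\cal{B}$-, subsets), which the paper leaves implicit.
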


\begin{proof}
	Analyzing the proof of Proposition~\ref{Prop:CantorFree}, we see that the only properties that we need of the collections $\cal{S}_{\cal{B}}(k^\Gamma)$ and $\cal{S}_{\cal{B}}((2^\bb{N})^\Gamma)$ for the proof to generalize are that they are closed under products and contain the flows $\mathrm{Color}(D, m)$. If $k, \ell\in \bb{N}$ an $X\subseteq k^\Gamma$ and $Y\subseteq \ell^\Gamma$ are $\cal{B}$-$D$-irreducible and $\cal{B}$-$E$-irreducible for some finite $D, E\subseteq \Gamma$, then $X\times Y\subseteq (k\times \ell)^\Gamma$ will be $\cal{B}$-$(D\cup E)$-irreducible. And as $\mathrm{Color}(D, m)$ is strongly irreducible, it is $\cal{B}$-irreducible.
\end{proof}
\vspace{2 mm}

Now we consider the group $F_2$. We consider the left Cayley graph of $F_2$ with respect to the standard generating set $A:= \{a, b, a^{-1}, b^{-1}\}$. We let $d\colon F_2\times F_2\to \omega$ denote the graph metric. Write $D_n = \{s\in F_2: d(s, 1_{F_2}) \leq n\}$.  
\vspace{2 mm}

\begin{defin}
	\label{Def:ConnSubsets}
 Given $n$ with $1\leq n< \omega$, we set 
	$$\cal{B}_n = \{D\in \cal{P}_f(F_2): \text{ the connected components of $D$ are pairwise $D_n$-apart}\}.$$
	Write $\cal{B}_\omega$ for the collection of finite, connected subsets of $F_2$.
\end{defin}
\vspace{2 mm}

\begin{prop}
	\label{Prop:Reflection}
	Suppose $X\subseteq k^{F_2}$ is $\cal{B}_\omega$-irreducible. Then there is some $n< \omega$ for which $X$ is $\cal{B}_n$-irreducible.
\end{prop}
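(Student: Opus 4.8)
The plan is to exploit the fact that, once the apartness constants are matched up, a member of $\cal{B}_n$ is nothing more than a finite union of connected sets (its components) that are forced to lie far apart, so that $\cal{B}_n$-gluing reduces to $\cal{B}_\omega$-gluing of the individual components. The easy containment $\cal{S}_{\cal{B}_n}(k^{F_2})\subseteq \cal{S}_{\cal{B}_\omega}(k^{F_2})$ is already recorded (item 3 of the Remark, since $\cal{B}_\omega\subseteq \cal{B}_n$); the content here is the single-shift converse.

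First I would fix a finite witness $D\subseteq F_2$ to the $\cal{B}_\omega$-irreducibility of $X$. Since the balls $D_n$ exhaust $F_2$ and $D$ is finite, I can choose $n<\omega$ with $D\subseteq D_n$. I claim this $n$ works, with the \emph{same} set $D$ serving as the witness to $\cal{B}_n$-irreducibility. To verify this, take $B_0,\dots,B_{m-1}\in \cal{B}_n$ that are pairwise $D$-apart, together with $x_0,\dots,x_{m-1}\in X$. I decompose each $B_i$ into its (finitely many) connected components; each such component is a finite connected set, hence a member of $\cal{B}_\omega$. I would then check that the entire finite family of components, ranging over all $i<m$, is pairwise $D$-apart. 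There are two cases. For two components of the \emph{same} $B_i$: by definition of $\cal{B}_n$ they are $D_n$-apart, and since $D\subseteq D_n$, being $D_n$-apart implies being $D$-apart. For components of \emph{distinct} $B_i$ and $B_j$: these are subsets of $B_i$ and $B_j$ respectively, and since $B_i,B_j$ are $D$-apart and $D$-apartness descends to subsets, the components are $D$-apart as well. Having arranged that all components lie in $\cal{B}_\omega$ and are pairwise $D$-apart, I apply $\cal{B}_\omega$-$D$-irreducibility to produce $y\in X$ agreeing with the appropriate $x_i$ on each component of each $B_i$. As each $B_i$ is the union of its components, $y|_{B_i}=x_i|_{B_i}$ for every $i$, establishing $\cal{B}_n$-$D$-irreducibility.

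I do not anticipate a serious obstacle: the only real idea is that the inequality $D\subseteq D_n$ makes the intra-set separation built into $\cal{B}_n$ strong enough to feed directly into $\cal{B}_\omega$-irreducibility, and that the number of components is finite so the $m$-fold gluing of Definition~\ref{Defin:IrreducibleWRTExhaustion} applies verbatim. The two points requiring care are the elementary monotonicity facts about apartness used above, namely that $U$-apartness implies $U'$-apartness whenever $U'\subseteq U$, and that $U$-apartness passes to subsets; both are immediate from $US\cap T=\varnothing=S\cap UT$. Note also that the argument never uses the tree structure of $F_2$ beyond the fact that the balls $D_n$ exhaust the group and that right translation preserves connectedness in the left Cayley graph (which is what makes $\cal{B}_n$ and $\cal{B}_\omega$ right-invariant in the first place).
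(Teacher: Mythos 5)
Your proof is correct and is essentially the paper's own argument: decompose each $B_i\in\cal{B}_n$ into its connected components, check the two apartness cases (intra-$B_i$ via the $D_n$-separation built into $\cal{B}_n$, inter-$B_i$ via apartness passing to subsets), and feed the components into $\cal{B}_\omega$-irreducibility. The only cosmetic difference is that the paper first enlarges the witness $D$ to a ball $D_n$ and uses $D_n$ as both the witness and the index, whereas you keep $D$ and pick $n$ with $D\subseteq D_n$; both are fine.
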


\begin{proof}
	Suppose $X$ is $\cal{B}_\omega$-$D_n$-irreducible. We show that $X$ is $\cal{B}_n$-$D_n$-irreducible. Suppose $m< \omega$, $B_0,...,B_{m-1}\in \cal{B}_n$ are pairwise $D_n$-apart, and that $x_0,...,x_{m-1}\in X$. For each $i< m$, we suppose $B_i$ has $n_i$-many connected componenets, and we write $\{C_{i,j}: j< n_i\}$ for these components. Then the collection of connected sets $\bigcup_{i< m} \{C_{i,j}: j< n_i\}$ is pairwise $D_n$-apart. As $X$ is $\cal{B}_\omega$-$D_n$-irreducible, we can find $y\in X$ so that for each $i< m$ and $j< n_i$, we have $y|_{C_{i,j}} = x_i|_{C_{i,j}}$. Hence $y|_{B_i} = x_i|_{B_i}$, showing that $X$ is $\cal{B}_n$-$D_n$-irreducible.
\end{proof}
\vspace{2 mm}

We now construct a $\cal{B}_\omega$-irreducible subshift with no $F_2$-invariant measure. We consider the alphabet $A^2$, and write $\pi_0, \pi_1\colon A^2\to A$ for the projections. We set
$$X_{pdox} = \{x\in (A^2)^{F_2}: \forall g, h\in F_2 \, \forall i, j< 2\, [(i, g)\neq (j, h)\Rightarrow \pi_i(x(g))\cdot g\neq \pi_j(x(h))\cdot h]\}.$$
More informally, the flow $X_{pdox}$ is the space of $2$-to-$1$ paradoxical decompositions of $F_2$ where each group element can only move by a generator. This is in some sense the prototypical example of an $F_2$-shift with no $F_2$-invariant measure.

When proving that $X_{pdox}$ is $\cal{B}_\omega$-irreducible, keep in mind that $D_1 = A\cup \{1_{F_2}\}$. 
\vspace{2 mm}

\begin{prop}
	\label{Prop:Paradoxical}
	$X_{pdox}$ is $\cal{B}_\omega$-$D_4$-irreducible.
\end{prop}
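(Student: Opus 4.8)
The plan is to prove directly that $X_{pdox}$ is $\cal{B}_\omega$-$D_4$-irreducible. So I fix connected finite sets $B_0,\dots,B_{m-1}\subseteq F_2$ that are pairwise $D_4$-apart, together with points $x_0,\dots,x_{m-1}\in X_{pdox}$, and I must produce a single $y\in X_{pdox}$ agreeing with $x_i$ on $B_i$ for each $i$. The natural first move is to understand the constraint defining $X_{pdox}$ as a combinatorial/graph condition: a point $x\in(A^2)^{F_2}$ assigns to each $g\in F_2$ two "outgoing" moves $\pi_0(x(g))$ and $\pi_1(x(g))$, each a generator in $A$, and the condition says that the $2|F_2|$ many targets $\pi_i(x(g))\cdot g$ are all distinct. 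Equivalently, $x$ encodes an injection from $\{0,1\}\times F_2$ into $F_2$ where the image of $(i,g)$ is a neighbor of $g$ in the left Cayley graph; this is exactly a perfect matching in the bipartite graph whose left vertices are two copies of each $g$ and whose right vertices are the single copies of each $h$, with edges recording adjacency. The point of the $2$-to-$1$ paradoxical structure is that such matchings exist globally on $F_2$, and the goal is to patch together finitely many local prescriptions while keeping global injectivity.

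The key steps I would carry out are as follows. First, I would record that any point $z\in X_{pdox}$ gives, near each $B_i$, a partial injection with range contained in the $1$-neighborhood $D_1 B_i$; because the $B_i$ are pairwise $D_4$-apart, the relevant "used" regions $D_1 B_i$ (on the range side) and the sources $B_i$ are well-separated, so the prescriptions from different $x_i$ cannot collide with each other. Concretely I would fix a reference point $z^*\in X_{pdox}$ (exploiting that $X_{pdox}\neq\emptyset$, which itself requires the standard paradoxical decomposition of $F_2$), define $y$ to equal $x_i$ on a neighborhood of $B_i$ and equal $z^*$ far away, and then repair injectivity in the transition annuli around each $B_i$. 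The transition region is where a generator chosen by $x_i$ near the boundary of $B_i$ might point at a target that is also claimed by the background $z^*$. The separation constant $4$ should be exactly what is needed: it gives a buffer of radius a few steps between $B_i$ and both its own image and the other blocks, inside which I have enough free vertices to reroute the finitely many conflicting moves. I would argue that within such an annulus the matching can always be locally adjusted because $F_2$ is a tree and neighborhoods are richly branching, so there is no parity or cycle obstruction of the sort one meets on $\bb{Z}$.

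The main obstacle I expect is making the local repair in the annulus genuinely work with the specific radius $4$, rather than "some large radius." The essential subtlety is that $X_{pdox}$ imposes a \emph{global} injectivity condition, so unlike ordinary strong irreducibility one cannot simply freely fill in the gap between the two prescriptions — one must check that the arrows defined on the boundary of $B_i$ by $x_i$, together with the arrows coming from the distant background, admit a consistent completion with no repeated target anywhere in the buffer. Because we only require $\cal{B}_\omega$-irreducibility (connected $B_i$'s, pairwise $D_4$-apart), I can use connectivity and the tree structure: I would process one block $B_i$ at a time, and around its boundary reassign the two outgoing generators of finitely many elements so that their targets are pushed into previously unused vertices of the surrounding tree, which the absence of cycles guarantees is possible. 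I would verify that radius $4$ (hence the witness $D_4$) leaves room both to absorb the images of boundary elements of $B_i$ and to keep these repairs from reaching into the next block, completing the argument.
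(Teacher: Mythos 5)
Your reduction of the problem to extending a partial injection (each $g$ gets two targets among its neighbors, all targets globally distinct) is correct, and your observation that the prescriptions coming from different blocks cannot collide with one another (their target sets lie in the sets $D_1B_i$, which are far apart) is fine. But the entire content of the proposition is the step you defer: that the partial matching determined by the $x_i|_{B_i}$ extends to all of $F_2$. Your proposed mechanism --- lay down a background configuration $z^*$, switch to $x_i$ near each $B_i$, and ``repair'' injectivity in annuli, with the repair justified by ``the absence of cycles'' and the assertion that ``radius $4$ should be exactly what is needed'' --- is not an argument, and as stated it runs into concrete trouble. Reassigning the arrows of a boundary vertex pushes its targets onto vertices that may already be claimed by $z^*$ or by earlier repairs; each vertex has only four neighbors and must emit two arrows, so the slack is one choice per vertex, and conflicts cascade rather than stay in an annulus. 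The interaction is worst in the ``watershed'' between two blocks at the minimal distance $5$, where a vertex can be hit both by a repair propagating out of $B_i$ and by one propagating out of $B_j$, and nothing in your outline adjudicates this. Moreover, the paper's Figure~2 shows that in $X_{pdox}$ a prescription can \emph{force} the value of an arrow at arbitrarily large distance (this is exactly why $X_{pdox}$ is not $D_n$-irreducible for any $n$), so ``local repair'' is not innocuous: one must use connectivity of the $B_i$ in a precise way to rule out long-range forcing, and your proposal invokes connectivity only in passing, never in a load-bearing step.

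For comparison, the paper sidesteps all bookkeeping by invoking a ($2$-to-$1$, defect) Hall's theorem for the locally finite bipartite graph at hand: it suffices to show that every finite $D\subseteq F_2\setminus\bigcup_i B_i$ has at least $2|D|$ unmatched points in $AD$, where a point is \emph{matched} if it is a target $\pi_j(x_i(g))\cdot g$ for some $g\in B_i$. One then takes a \emph{minimal} failure $D$ of this condition and finds $t\in D$ such that three of the four subtrees hanging off $t$ contain no point of $D$; minimality of $D$ forces at least two of the corresponding three neighbors of $t$ to be matched, whereas connectivity of each $B_i$ (a connected set reaching into one subtree at $t$ cannot come near the others without containing $t$) together with the $D_4$-separation (blocks other than the offending $B_i$ stay at distance $\geq 3$ from $t$) shows at most one of them can be matched --- a contradiction. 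That is where the constant $4$ and the hypothesis $B_i\in \cal{B}_\omega$ actually do their work. If you want to salvage your cut-and-paste scheme, you would essentially need either to prove this Hall-type extension lemma anyway, or to exhibit an explicit orientation rule (e.g.\ point away from the nearest block near blocks, away from a fixed end of the tree elsewhere) and verify that every vertex has in-degree at most one across all interfaces; either way, the missing verification is the theorem.
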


\begin{proof}
	Let $B_0,...,B_{k-1}\in \cal{B}_\omega$ be pairwise $D_4$-apart. Let $x_0,...,x_{k-1}\in X_{pdox}$. To construct $y\in X_{pdox}$ with $y|_{B_i} = x_i|_{B_i}$ for each $i< k$, we need to verify a $2$-to-$1$ Hall's matching criterion on every finite subset of $F_2\setminus \bigcup_{i< k} B_i$. Call $s\in F_2$ \emph{matched} if for some $i< k$, some $g\in B_i$, and some $j< 2$, we have $s = \pi_j(x_i(g))\cdot g$. So we need for every finite $D\in \cal{P}_f(F_2\setminus \bigcup_{i<k} B_i)$ that $AD$ contains at least $2|D|$-many unmatched elements. Towards a contradiction, let $D\in \cal{P}_f(F_2\setminus \bigcup_{i<k} B_i)$ be a minimal failure of the Hall condition. 
	
	In the left Cayley graph of $F_2$, given a reduced word $w$ in alphabet $A = \{a, b, a^{-1}, b^{-1}\}$, write $N_w$ for the set of reduced words which \emph{end} with $w$. Now find $t\in D$ (let us assume the leftmost character of $t$ is $a$) so that all of $D\cap N_{a^\frown t}$, $D\cap N_{b^\frown t}$ and $D\cap N_{b^{-1}{}^\frown t}$ are empty. If any two of $a^\frown t$, $b^\frown t$ and $b^{-1}{}^\frown t$ is an unmatched point in the boundary of $D$, then $D\setminus \{t\}$ is a smaller failure of Hall's criterion. So there must be some $i< k$, some $g\in B_i$, and some $j< 2$, we have $\pi_j(x_i(g))\cdot g \in \{a^\frown t, b^\frown t, b^{-1}{}^\frown t\}$. Let us suppose $\pi_j(x_i(g))\cdot g = a^\frown t$. Note that since $g\not\in D$, we must have $g\in \{ba^\frown t, a^2{}^\frown t, b^{-1}a^\frown t\}$.  But then since $B_i$ is connected, we have $D_1B_i\cap \{b^\frown t, b^{-1}{}^\frown t\} = \emptyset$, and since the other $B_q$ are at least distance $5$ from $B_i$, we have $D_1B_q\cap \{b^\frown t, b^{-1}{}^\frown t\} = \emptyset$ for every $q\in k\setminus \{i\}$. In particular, $b^\frown t$ and $b^{-1}{}^\frown t$ are unmatched points in the boundary of $D$, a contradiction.   
\end{proof}
\vspace{2 mm}

We remark that $X_{pdox}$ is not $D_n$-irreducible for any $n\in \bb{N}$. See Figure~\ref{Fig:Xpdox}.

\begin{figure}
	\begin{center}
		\begin{tikzpicture}[scale=1]
			\draw [fill=black,radius=0.08] (4.5*0+2*0,6) circle;
			\node [below] at (4.5*0+2*0+0.5,6.1) {$v_{00}$};
			\draw [-to, thick, red] (4.5*0+2*0-0.2,6) -- (4.5*0+2*0-0.75,6);
			\draw [-to, thick, red] (4.5*0+2*0,5.8) -- (4.5*0+2*0, 5.2);
			\draw [fill=black,radius=0.08] (4.5*0+2*0,5) circle;
			\node [below] at (4.5*0+2*0+0.5,5.1) {$u_{00}$};
			
			\draw [fill=black,radius=0.08] (4.5*0+2*1,6) circle;
			\node [below] at (4.5*0+2*1+0.5,6.1) {$v_{01}$};
			\draw [-to, thick, red] (4.5*0+2*1-0.2,6) -- (4.5*0+2*1-0.75,6);
			\draw [-to, thick, red] (4.5*0+2*1,5.8) -- (4.5*0+2*1, 5.2);
			\draw [fill=black,radius=0.08] (4.5*0+2*1,5) circle;
			\node [below] at (4.5*0+2*1+0.5,5.1) {$u_{01}$};
			
			\draw [thick] (4.5*0+2*0+0.12,4.84) -- (4.5*0+2*0.5-0.12,4.16);
			\draw [thick] (4.5*0+2*1-0.12,4.84) -- (4.5*0+2*0.5+0.12,4.16);
			\draw [fill=black,radius=0.08] (4.5*0+2*0.5,4) circle;
			\node [below] at (4.5*0+2*0.5+0.5,4.1) {$v_0$};
			\draw [-to, dashed, thick, red] (4.5*0+2*0.5-0.2,4) -- (4.5*0+2*0.5-0.75,4);
			\draw [-to, dashed, thick, red] (4.5*0+2*0.5,3.8) -- (4.5*0+2*0.5,3.2);
			\draw [fill=black,radius=0.08] (4.5*0+2*0.5,3) circle;
			\node [below] at (4.5*0+2*0.5+0.5,3.1) {$u_0$};
			
			\draw [fill=black,radius=0.08] (4.5*1+2*0,6) circle;
			\node [below] at (4.5*1+2*0+0.5,6.1) {$v_{10}$};
			\draw [-to, thick, red] (4.5*1+2*0-0.2,6) -- (4.5*1+2*0-0.75,6);
			\draw [-to, thick, red] (4.5*1+2*0,5.8) -- (4.5*1+2*0, 5.2);
			\draw [fill=black,radius=0.08] (4.5*1+2*0,5) circle;
			\node [below] at (4.5*1+2*0+0.5,5.1) {$u_{10}$};
			
			\draw [fill=black,radius=0.08] (4.5*1+2*1,6) circle;
			\node [below] at (4.5*1+2*1+0.5,6.1) {$v_{11}$};
			\draw [-to, thick, red] (4.5*1+2*1-0.2,6) -- (4.5*1+2*1-0.75,6);
			\draw [-to, thick, red] (4.5*1+2*1,5.8) -- (4.5*1+2*1, 5.2);
			\draw [fill=black,radius=0.08] (4.5*1+2*1,5) circle;
			\node [below] at (4.5*1+2*1+0.5,5.1) {$u_{11}$};
			
			\draw [thick] (4.5*1+2*0+0.12,4.84) -- (4.5*1+2*0.5-0.12,4.16);
			\draw [thick] (4.5*1+2*1-0.12,4.84) -- (4.5*1+2*0.5+0.12,4.16);
			\draw [fill=black,radius=0.08] (4.5*1+2*0.5,4) circle;
			\node [below] at (4.5*1+2*0.5+0.5,4.1) {$v_1$};
			\draw [-to, dashed, thick, red] (4.5*1+2*0.5-0.2,4) -- (4.5*1+2*0.5-0.75,4);
			\draw [-to, dashed, thick, red] (4.5*1+2*0.5,3.8) -- (4.5*1+2*0.5,3.2);
			\draw [fill=black,radius=0.08] (4.5*1+2*0.5,3) circle;
			\node [below] at (4.5*1+2*0.5+0.5,3.1) {$u_1$};
			
			\draw [thick] (4.5*0+2*0.5+0.16,3-0.12) -- (3.25-0.16,1.5+0.12);
			\draw [thick] (4.5*1+2*0.5-0.16,3-0.12) -- (3.25+0.16,1.5+0.12);
			\draw [fill=black,radius=0.08] (3.25,1.5) circle;
			\node [below] at (3.25+0.5,1.6) {$v_\varnothing$};
			\draw [-to, dashed, thick, red] (3.25-0.2,1.5) -- (3.25-0.75,1.5);
			\draw [-to, dashed, thick, red] (3.25,1.5-0.2) --(3.25,0.2);
			\draw [fill=black,radius=0.08] (3.25,0) circle;
			\node [below] at (3.25+0.5,0.1) {$u_\varnothing$};
			]
		\end{tikzpicture}
		\caption{A pair of outgoing edges, drawn in solid red, is chosen at each of $v_{00}$, $v_{01}$, $v_{10}$, and $v_{11}$. Edges which must consequently be oriented in a particular direction are indicated with dashed red arrows. Most importantly, $v_{\varnothing}$ is forced to direct an edge to $u_\varnothing$. By considering the generalization of this picture for any length of binary string, we see that $X_{pdox}$ cannot be $D_n$-irreducible for any $n\in \bb{N}$.
		} \label{Fig:Xpdox}
	\end{center}
\end{figure}
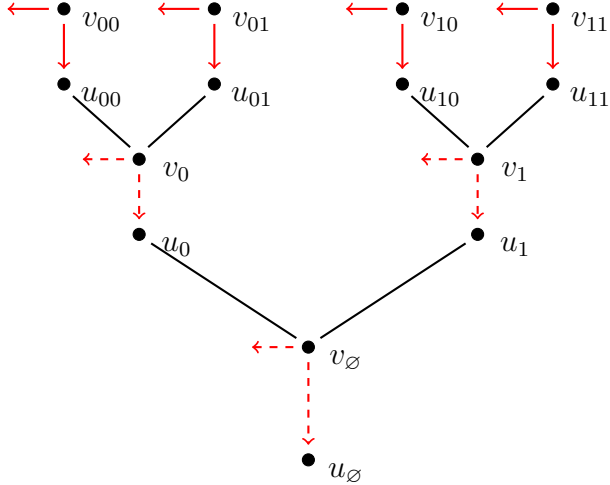

\section{The construction}

Our goal for the rest of the paper is to use $X_{pdox}$ to build a subshift of $(2^\bb{N}))^{G\times F_2}$ which is free, $G$-minimal, and with no $F_2$-invariant measure. In what follows, given an $F_2$-coset $\{g\}\times F_2$, we endow this coset with the left Cayley graph for $F_2$ using the generating set $A$ exactly as above. We extend the definition of $\cal{B}_n$ to refer to finite subsets of any given $F_2$-coset. 
\vspace{2 mm}

\begin{defin}
	\label{Def:ConnSubsetsProduct}
	Given $n$ with $1\leq n\leq \omega$, we set 
	$$\cal{B}_n^*= \{D\in \cal{P}_f(G\times F_2): \text{ for each $F_2$-coset $C$, $D\cap C\in \cal{B}_n$}\}.$$ 
\end{defin}
\vspace{2 mm}

Given $y\in k^{G\times F_2}$ and $g\in G$, we define $y_g\in k^{F_2}$ where given $s\in F_2$, we set $y_g(s) = y(g, s)$. If $X\subseteq k^{F_2}$ is $\cal{B}_n$-irreducible, then the subshift $X^G\subseteq k^{G\times F_2}$ is in $\cal{S}_n$, where we view $X^G$ as the set $\{y\in k^{G\times F_2}: \forall g\in G\, (y_g\in X)\}$. In particular, $(X_{pdox})^G$ is $\cal{B}^*_4$-irreducible. By encoding $X_{pdox}$ as a subshift of $(2^m)^{G\times F_2}$ for some $m\in \bb{N}$ and considering $\tilde{\pi}_m^{-1}(X_{pdox})\subseteq (2^\bb{N})^{G\times F_2}$, we see that there is a $\cal{B}_4^*$-irreducible subflow of $(2^\bb{N})^{G\times F_2}$ for which the $F_2$-action doesn't fix a measure. It follows that such subflows constitute a non-empty open subset of $\Phi:= \overline{\bigcup_n \cal{S}_{\cal{B}_n^*}((2^\bb{N})^{G\times F_2})}$. Combining the next result with Proposition~\ref{Prop:CantorFreeGeneral}, we will complete the proof of Theorem~\ref{Thm:CharMeas}.
\vspace{2 mm}

\begin{prop}
	\label{Prop:ProductGenericity}
	With $\Phi$ as above, the $G$-minimal flows are dense $G_\delta$ in $\Phi$. 
\end{prop}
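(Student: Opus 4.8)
The plan is to run the strategy of Proposition~\ref{Prop:UFOMinimal} inside $\Phi$, replacing ordinary strong irreducibility by $\cal{B}_n^*$-irreducibility while carefully tracking the $F_2$-coset structure. First note that $G = G\times\{1_{F_2}\}$ is an infinite normal subgroup of $\Gamma := G\times F_2$, so by Proposition~\ref{Prop:UFOsExist} the inclusion $G\subseteq \Gamma$ admits UFOs; the right $G$-cosets are exactly the sets $G\times\{w\}$, while the $F_2$-cosets $\{g\}\times F_2$ are the objects constrained by $\cal{B}_n^*$. The $G_\delta$ half is immediate and identical to the earlier arguments: $G$-minimality is the intersection over all finite $E\subseteq\Gamma$ and all $m\in\bb{N}$ of the $(G, E, m)$-minimality conditions, each of which is open in $\mathrm{Sub}((2^\bb{N})^\Gamma)$ and hence in $\Phi$. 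So the entire content is density.

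For density, fix a nonempty open $O\subseteq\Phi$. Since $\Phi = \overline{\bigcup_n \cal{S}_{\cal{B}_n^*}((2^\bb{N})^\Gamma)}$, I may choose some $n$ and some $X\in\cal{S}_{\cal{B}_n^*}((2^\bb{N})^\Gamma)$ lying in $O$, and after shrinking assume $O$ contains a basic neighborhood $N$ of $X$; as in Proposition~\ref{Prop:UFOMinimal} it suffices to build a $G$-minimal $Y$ lying in $N$ with $Y\in\cal{S}_{\cal{B}_{n'}^*}$ for some $n'$ (so automatically $Y\in\Phi$). Let $D$ witness the $\cal{B}_n^*$-irreducibility of $X$, and let $E$ govern $N$. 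The key adaptation is that any region fed into the $\cal{B}_n^*$-irreducibility of $X$ must itself lie in $\cal{B}_n^*$, i.e.\ meet each $F_2$-coset in a set whose connected components are $D_n$-apart. I arrange this by enlarging $E$ and the auxiliary window $U$ so that for every $a\in G$ the slice $\{w\in F_2: (a,w)\in E\}$ (resp.\ the corresponding slice of $U$) is \emph{connected}; a translate of a connected slice is a single connected set, so such translates automatically lie in $\cal{B}_n^*$. With this convention I build, exactly as before, an $EDE$-spaced set $Q\subseteq U\cap G$ of size $|P_E(X)|$, and then use $\cal{B}_n^*$-$D$-irreducibility (the sets $\{Eq: q\in Q\}$ are pairwise $D$-apart and each lies in $\cal{B}_n^*$) to produce a pattern $\alpha\in P_U(X)$ realizing every $E$-pattern of $X$ under a shift by some $q\in Q$.

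Next I choose the UFO. Because being a UFO is not upward closed, I first fix a finite symmetric $W\supseteq UDU$ large enough that any two $W$-spaced points $g,g'$ have $Ug$ and $Ug'$ meeting each $F_2$-coset in sets that are $D_{n'}$-apart; this is possible because $Ug\cap C$ is a bounded-diameter translate of a slice of $U$, so if two traces sit within $F_2$-distance $n'$ in a common coset then $g'g^{-1}$ is forced into a fixed finite set depending only on $U$ and $n'$. Only then do I invoke Proposition~\ref{Prop:UFOsExist} to pick a $(\Gamma, G)$-UFO $V\supseteq W$; the parameter $n'$ is chosen at the very end, large relative to $n$, the diameter of $U$, and $D$. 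Define $Y = \{y\in X: \exists\text{ a maximal } V\text{-spaced } T\text{ with } (g{\cdot}y)|_U=\alpha \text{ for all } g\in T\}$. That $Y$ is nonempty, lies in $N$, and is $G$-minimal follows essentially verbatim from Proposition~\ref{Prop:UFOMinimal}: nonemptiness uses $V\supseteq UDU$ together with the fact that the $\{Ug: g\in T\}$ lie in $\cal{B}_n^*$ and are pairwise $D$-apart, while $G$-minimality uses that the witnessing $V$-spaced set meets $G\times\{1_{F_2}\}$ since $V$ is a UFO, after which a shift by the appropriate $q\in Q$ delivers the desired $E$-pattern.

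The main obstacle is verifying $Y\in\cal{S}_{\cal{B}_{n'}^*}$. Following Proposition~\ref{Prop:UFOMinimal}, given $B_0,\dots,B_{l-1}\in\cal{B}_{n'}^*$ pairwise $D'$-apart (for a witness $D'\supseteq DUVUD$) together with $y_0,\dots,y_{l-1}\in Y$ with witnessing $V$-spaced sets $T_i$, I set $B_i' = \{g\in T_i: DUg\cap B_i\neq\emptyset\}$, extend $\bigcup_i B_i'$ to a maximal $V$-spaced set $B$, and glue using $\cal{B}_n^*$-$D$-irreducibility of $X$ on the regions $\{B_i\cup UB_i'\}_i\cup\{Ug: g\in B\setminus\bigcup_i B_i'\}$. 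The $D'$-apartness of the $B_i$ yields $D$-apartness of these regions exactly as in the earlier proof, so the only genuinely new point is that each region lies in $\cal{B}_n^*$. The filler regions $Ug$ are connected per coset and harmless; the delicate ones are $B_i\cup UB_i'$. Intersecting with an $F_2$-coset $C$, the set $B_i\cap C$ has components $D_{n'}$-apart, while $UB_i'\cap C$ is a union of connected pieces of diameter at most $\mathrm{diam}(U)$, attached within $DU$ of $B_i$ and, by the spacing built into $V$, $D_{n'}$-apart from one another. Thus each component of the union is a single component of $B_i\cap C$ enlarged by a bounded neighborhood, and — provided $n'$ exceeds $n$ by a constant depending only on $U$ and $D$ — distinct components remain $D_n$-apart, so the union lies in $\cal{B}_n$. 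Pinning down this per-coset merging bound (in particular controlling pieces $Ug\cap C$ whose attachment to $B_i$ occurs in a neighboring coset) is the crux of the argument and fixes the final choice of $n'$; everything else transfers mechanically from Proposition~\ref{Prop:UFOMinimal}. Combined with Proposition~\ref{Prop:CantorFreeGeneral}, this density statement completes the proof of Theorem~\ref{Thm:CharMeas}.
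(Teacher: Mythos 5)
Your skeleton is the same as the paper's: reduce to density, take $E$ connected in each $F_2$-coset and a window $U$ with connected $F_2$-slices (the paper takes $U = U_0\times D_r$), build $Q$ and the pattern $\alpha$ via $\cal{B}_n^*$-$D$-irreducibility, pick a $(G\times F_2, G)$-UFO $V$ using Proposition~\ref{Prop:UFOsExist}, define $Y$ by the same maximal-$V$-spaced-set condition, and transfer non-emptiness, membership in the basic open set, and $(G,E)$-minimality verbatim from Proposition~\ref{Prop:UFOMinimal}. All of that is right. The gap is at exactly the point you yourself call ``the crux,'' and the mechanism you propose for closing it cannot work. The regions $B_i\cup UB_i'$ that you feed into the $\cal{B}_n^*$-$D$-irreducibility of $X$ need not lie in $\cal{B}_n^*$, and no choice of $n'$ repairs this. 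The reason: $g\in B_i'$ only guarantees $DUg\cap B_i\neq\emptyset$, so within a given $F_2$-coset $C$ the piece $Ug\cap C$ may be disjoint from every component of $B_i\cap C$ while lying at $F_2$-distance $\leq n$ from one of them (for instance when the attachment to $B_i$ happens in another coset). Such a piece is then its own connected component of $(B_i\cup UB_i')\cap C$, sitting too close to a component of $B_i\cap C$, so the union fails to be in $\cal{B}_n$ in that coset. Your assertion that ``each component of the union is a single component of $B_i\cap C$ enlarged by a bounded neighborhood'' is false for precisely these floating pieces, and enlarging $n'$ only spreads the components of $B_i\cap C$ away from \emph{each other}; it does nothing about a $Ug\cap C$ that is close-but-not-touching. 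The obstruction is proximity, not distance, so it cannot be removed by parameter tuning.

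The paper's resolution is to modify the regions rather than the parameters: it enlarges $S_i\cup UB_i$ to a set $J_i$ by adding, in each coset $C$ and for each $g\in B_i$, the geodesic segment joining $Ug$ to the (at most one, since the components of $S_i\cap C$ are $D_N$-apart with $N = 4r+3n$) component $\Theta_{C,g}$ of $S_i\cap C$ that is $D_n$-close to $Ug$ but disjoint from it. One must then prove the paper's three claims: that $J_i\in\cal{B}_n^*$ (this is where the per-coset geometry of the bridges is controlled), that distinct $J_i, J_j$ remain $D$-apart, and that each filler set $Ug$, $g\in B\setminus\bigcup_i B_i$, is $D$-apart from every $J_i$. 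This last claim is also why the paper demands the stronger containment $V\supseteq UD^2U$ rather than $V\supseteq UDU$: the bridges extend $J_i$ beyond $S_i\cup UB_i$, so extra separation is needed between the fillers and the bridged regions. (Your ad hoc spacing condition on $W$, by contrast, concerns only the mutual distance of the $Ug$ pieces, which is already automatic from $V\supseteq UDU$ and $D\supseteq D_{2n}$, and does not address the filler-versus-bridge interaction.) Without the bridging step your gluing regions are illegal inputs to the irreducibility of $X$, so the final gluing, and with it the proof, does not go through.
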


\begin{proof}
	We show the result for $\Phi_k:= \overline{\bigcup_n \cal{S}_{\cal{B}_n^*}(k^{G\times F_2})}$ to simplify notation; the proof in full generality is almost identical.

	We only need to show density. To that end, fix a finite symmetric $E\subseteq G\times F_2$ which is connected in each $F_2$-coset. It is enough to show that the $(G, E)$-minimal subshifts are dense in $\Phi_k$. Fix some non-empty open $O\subseteq \Phi_k$. By enlarging $E$ and/or shrinking $O$, we may assume that for some $n< \omega$ and $X\in \cal{S}_{\cal{B}_n^*}(k^{G\times F_2})$ that $O = \{X'\in \Phi_k: P_E(X') = P_E(X)\}$. We will build a $(G, E)$-minimal subshift $Y$ so that $P_E(Y) = P_E(X)$ and so that for some $N< \omega$, we have $Y\in \cal{S}_{\cal{B}_N^*}(k^{G\times F_2})$. 
	
	Recall that $D_n\subseteq F_2$ denotes the ball of radius $n$. Fix a finite, symmetric $D\subseteq G\times F_2$ so that $D_{2n}\subseteq D$ and $X$ is $\cal{B}_n^*$-$D$-irreducible. Find a finite symmetric $U_0\subseteq G$ with $1_G\subseteq U_0$ and $r< \omega$ so that upon setting $U = U_0\times D_r\subseteq G\times F_2$, then $U$ is large enough to contain an $EDE$-spaced set $Q\subseteq G$ with $EQ\subseteq U$. As $X$ is $\cal{B}_n^*$-$D$-irreducible, there is a pattern $\alpha\in P_U(X)$ so that $\{(g\alpha)|_E: g\in Q\} = P_E(X)$. 
	
	Let $V\supseteq UD^2U$ be a $(G\times F_2, G)$-UFO. We remark that for most of the remainder of the proof, it would be enough to have $V\supseteq UDU$; we only use the stronger assumption $V\supseteq UD^2U$ in the proof of the final claim. Consider the following subshift:
	$$Y = \{y\in X: \exists\text{ a max.\ $V$-spaced set $T$ so that }\forall g\in T\, (gy)|_U = \alpha\}.$$
	The proof that $Y$ is non-empty and $(G, E)$-minimal is exactly the same as the analogous proof from Proposition~\ref{Prop:UFOMinimal}.
	
	We now show that $Y\in \cal{S}_{\cal{B}_N^*}(k^{G\times F_2})$ for $N = 4r+3n$. Set $W = DUVUD$. We show that $Y$ is $\cal{B}_N^*$-$W$-irreducible. Suppose $m< \omega$, $y_0,...,y_{m-1}\in Y$ and $S_0,...,S_{m-1}\in \cal{B}_N^*$ are pairwise $W$-apart. Suppose for each $i< m$ that $T_i\subseteq G\times F_2$ is a maximal $V$-spaced set which witness that $y_i\in Y$. Set $B_i = \{g\in T_i: DUg\cap S_i\neq \emptyset\}$. Then $\bigcup_{i< m} B_i$ is $V$-spaced, so enlarge to a maximal $V$-spaced set $B\subseteq G\times F_2$. 
	
	For each $i< m$, we enlarge $S_i\cup UB_i$ to $J_i\in\cal{B}_n^*$ as follows. Suppose $C\subseteq G\times F_2$ is an $F_2$-coset. Each set of the form $C\cap Ug$ is connected. Since $S_i\in \cal{B}_N^*$, it follows that given $g\in B_i$, there is at most one connected component $\Theta_{C, g}$ of $S_i\cap C$ with $Ug\cap \Theta_g = \emptyset$, but $Ug\cap D_n\Theta_g\neq \emptyset$. We add the line segment in $C$ connecting $\Theta_{C, g}$ and $Ug$. Upon doing this for each $g\in B_i$ and each $F_2$-coset $C$, this completes the construction of $J_i$. Observe that $J_i\subseteq D_{n-1}S_i\cap UB_i$.
	\vspace{2 mm}
	
	\begin{claim}
		Let $C$ be an $F_2$-coset, and suppose $Y_0$ is a connected component of $S_i\cap C$. Let $Y$ be the connected component of $J_i\cap C$ with $Y_0\subseteq Y$. Then $Y\subseteq D_{2r+n}Y_0$. In particular, if $Y_0\neq Z_0$ are two connected components of $S_i\cap C$, then $Y_0$ and $Z_0$ do not belong to the same component of $J_i\cap C$.  
	\end{claim}

	\begin{proof}
		Let $L = \{x_j: j< \omega\}\subseteq C$ be a ray with $x_0\in Y_0$ and $x_j\not\in Y_0$ for any $j\geq 1$. Then $\{j< \omega: x_j\in J_i\cap C\}$ is some finite initial segment of $\omega$. We want to argue that for some $j\leq 2r+n+1$, we have $x_j\not\in J_i\cap C$. First we argue that if $x_n\in J_i\cap C$, then $x_n\in UB_i$. Otherwise, we must have $x_n\in D_{n-1}S_i$. But since $x_n\not\in D_{n-1}Y_0$, there must be another component $Y_1$ of $S_i\cap C$ with $x_n\in D_nY_1$. But this implies that $Y_0$ and $Y_1$ are not $D_{2n-1}$-apart, a contradiction since $2n-1\leq 4r-3n = N$.
		
		Fix $g\in B_i$ with $x_n\in Ug$. Let $q< \omega$ be least with $q> n$ and $x_q \not\in U_g$. We must have $q\leq 2r+n+1$. We claim that $x_q\not\in J_i\cap C$. Towards a contradiction, suppose $x_q\in J_i\cap C$. We cannot have $x_q\in UB_i$, so we must have $x_q\in D_{n-1}S_i$. But now there must be some component $Y_1$ of $S_i\cap C$ with $x_q\in D_{n-1}Y_1$. But then $D_{2r+2n}Y_0\cap Y_1\neq \emptyset$, a contradiction as $Y_0$ and $Y_1$ are $D_N$-apart. This concludes the proof that $Y\subseteq D_{2r+n}Y_0$.
		
		Now suppose $Y_0\neq Z_0$ are two connected components of $S_i\cap C$. Then $Y_0$ and $Z_0$ are $N$-apart. In particular, $Z_0\not\subseteq D_{2r+n}Y_0$, so cannot belong to the same connected component of $J_i\cap C$ as $Y_0$.  
	\end{proof}
		
	\begin{claim}
		$J_i\in \cal{B}_n^*$. 	
	\end{claim}	
		
	\begin{proof}	
		Fix an $F_2$-coset $C$ and two connected components $Y\neq Z$ of $J_i\cap C$. By the previous claim, each of $Y$ and $Z$ can only contain at most one non-empty component of $S_i\cap C$. The claim will be proven after considering three cases. 
		\begin{enumerate}
			\item 
			First suppose each of $Y$ and $Z$ contain a non-empty component of $S_i\cap C$, say $Y_0\subseteq Y$ and $Z_0\subseteq Z$. Then since $Y_0$ and $Z_0$ are $D_{4r+3n}$-apart, the previous claim implies that $Y$ and $Z$ are $D_n$-apart.
			\item
			Now suppose $Y$ contains a non-empty component $Y_0$ of $S_i\cap C$ and that $Z$ does not. Then for some $g\in B_i$, we have $Z = Ug\cap C$. Towards a contradiction, suppose $D_nY\cap Ug \neq \emptyset$. Let $L = \{x_j: j\leq M\}$ be the line segment connecting $Y$ and $Ug$ with $L\cap Y = \{x_0\}$ and $L\cap Ug = \{x_M\}$. We must have $M\leq n$. We cannot have $x_0\in UB_i$, so we must have $x_0\in D_{n-1}S_i$. This implies that $x_0\in D_{n-1}Y_0$. We cannot have $x_0\in Y_0$, as otherwise, we would have connected $Y_0$ and $Ug\cap C$ when constructing $J_i$. It follows that for some $h\in B_i$, we have that $x_0$ is on the line segment $L' = \{x_j': j\leq  M'\}$ connecting $Y_0$ and $Uh\cap C$, and we have $M'\leq n$. But this implies that $Ug\cap D_{2n}Uh\neq \emptyset$, a contradiction since $V\supseteq UDU$ and $D\supseteq D_{2n}$.
			\item
			If neither $Y$ nor $Z$ contain a component of $S_i\cap C$, then there are $g\neq h\in B_i$ with $Y = Uh\cap C$ and $Z = Ug\cap C$. It follows that $Y$ and $Z$ are $D_n$-apart.\qedhere
		\end{enumerate} 
	\end{proof}
	
	\begin{claim}
		Suppose $i\neq j< m$. Then $J_i$ and $J_j$ are $D$-apart.
	\end{claim} 

	\begin{proof}
		We have that $J_i\subseteq D_{n-1}S_i\cup UB_i$, and likewise for $j$. As $UB_i\subseteq U^2DS_i$ and as $D\supseteq D_{2n}$, we have $J_i\subseteq U^2DS_i$, and likewise for $j$. As $S_i$ and $S_j$ are $W$-apart and as $V\supseteq UDU$, we see that $J_i$ and $J_j$ are $D$-apart.
	\end{proof}

	\begin{claim}
		Suppose $g\in B\setminus \bigcup_{i< m} B_i$. Then $Ug$ and $J_i$ are $D$-apart for any $i< m$.
	\end{claim}

	\begin{proof}
		As $g\not\in B_i$, we have $U_g$ and $S_i$ are $D$-apart. Also, for any $h\in B$ with $g\neq h$, we have that $Ug$ and $Uh$ are $D$-apart. Now suppose $DUg\cap J_i\neq \emptyset$. If $x\in DUg\cap J_i$, then on the coset $C = F_2x$, $x$ must belong on the line between a component of $S_i\cap C$ and $Uh$ for some $h\in B_i$. Furthermore, we have $x\in D_{n-1}Uh$. But since $D_{2n}\subseteq D$, this contradicts that $Ug$ and $Uh$ are $D^2$-apart (using the full assumption $V\supseteq UD^2U$). 
	\end{proof}

	We can now finish the proof of Proposition~\ref{Prop:ProductGenericity}. The collection $\{J_i: i< m\}\cup \{Ug: g\in B\setminus (\bigcup_{i< m} B_i)\}$ is a pairwise $D$-apart collection of members of $\cal{B}_n^*$. As $X$ is $\cal{B}_n$-$D$-irreducible, we can find $y\in X$ with $y|_{J_i} = y_i|_{J_i}$ for each $i< m$ and with $(gy)|_U = \alpha$ for each $g\in B\setminus (\bigcup_{i< m} B_i)$. As $J_i\supseteq UB_i$ and since $B_i\subseteq T_i$, we actually have $(gy)|_U = \alpha$ for each $g\in B$. As $B$ is a maximal $V$-spaced set, it follows that $y\in Y$ and $y|_{S_i} = y_i|_{S_i}$ as desired.
\end{proof}


\begin{thebibliography}{99}
	\bibitem{CK} V.\ Cyr and B. Kra, The automorphism group of a minimal shift of stretched exponential
	growth, \emph{Journal of Modern Dynamics}, \textbf{10} (2016), 483--495.
	
	\bibitem{CK2} V.\ Cyr and B.\ Kra, Characteristic measures for language stable subshifts, \emph{submitted}, \url{https://arxiv.org/abs/2101.12669}.
	
	\bibitem{FT} J.\ Frisch and O.\ Tamuz, Characteristic measures of symbolic dynamical systems, \emph{Ergodic Theory Dyn.\ Sys.}, to appear. 
	
	\bibitem{FT2} J.\ Frisch and O.\ Tamuz, Symbolic dynamics on amenable groups: the entropy of generic shifts, \emph{Ergodic Theory Dyn.\ Sys.}, \textbf{37(4)}, 1187--1210.
	
	\bibitem{FTVF} J.\ Frisch, O.\ Tamuz, and P.\ Vahidi Ferdowsi, Strong amenability and the infinite conjugacy class property, \emph{Inventiones Mathematicae}, \textbf{218} (2019), 833-851.
	
	\bibitem{GTWZ}
	E. Glasner, T. Tsankov, B. Weiss, and A. Zucker, Bernoulli disjointness, \emph{Duke Mathematical Journal}, \textbf{170(4)} (2021), 615--651.
	
	\bibitem{ZucMinCent} A.\ Zucker, Minimal flows with arbitrary centralizer, \emph{Ergodic Theory Dyn.\ Sys.}, (2020), DOI: 10.1017/etds.2020.128.
\end{thebibliography}
\end{document}